\newtheorem{notation}{Notation}
\newcommand*\xbar[1]{%
  \hbox{%
    \vbox{%
      \hrule height 0.5pt 
      \kern0.4ex
      \hbox{%
        \kern-0.15em
        \ensuremath{#1}%
        \kern-0.15em
      }%
    }%
  }%
}
\def\P{\mathbb{P} }
\def\R{\mathbb{R}}
\begin{document}
 \journalname{JOTP}

\newtheorem*{thma}{Theorem A}
\newtheorem*{thmb}{Theorem B}

\title{A generalization of the submartingale property:  maximal inequality and applications to various stochastic processes\thanks{This research was supported in part by Simons Foundation Grant  579110. The support is gratefully acknowledged.}
}

\titlerunning{A generalization of the submartingale property}        

\author{J\'{a}nos Engl\"{a}nder}


\institute{J\'{a}nos Engl\"{a}nder \at
              Department of Mathematics, University of Colorado, Boulder, CO-80309, USA. \\
              Tel.: +303-492-4846\\
              \email{janos.englander@colorado.edu}   
}

\date{Received: date / Accepted: date}

\maketitle

\begin{abstract}
We generalize the notion of the submartingale property and Doob's inequality. Furthermore, we show how the latter leads to new inequalities for several stochastic processes:  certain time series, 
L\'evy processes, 
random walks, 
processes with independent increments,
 branching processes and continuous state branching processes,  
 branching diffusions and superdiffusions,
 as well as some Markov processes, including geometric Brownian motion. 
\vspace{3mm}

\keywords{$a$-achieving process\and Doob's inequality \and maximal-inequality \and time-series\and  L\'evy process  \and processes with independent increments  \and submartingale  \and random walk  \and branching process\and branching diffusion  \and superprocess \and continuous state branching process
 \and Markov process \and geometric Brownian motion \and approximate convexity.}
\subclass{60E15 \and 60G45 \and 60G48 \and 60G51 \and 60J80}
\end{abstract}

\section{Introduction}\label{intro}
The aim of this note is to generalize the notion of the submartingale property and to show how an improvement on Doob's inequality, already pointed out in \cite{ESR}, leads to new inequalities for various stochastic processes, such as 
certain time series, 
L\'evy processes, 
random walks, 
processes with independent increments,
 branching processes and continuous state branching processes, 
 branching diffusions and superdiffusions,
 as well as some Markov processes, including geometric Brownian motion. 
Despite the proofs being very simple, to the best of our knowledge, the inequalities obtained are new. They seem to provide remedies in situations when the standard submartingale toolset is not available.
For background on submartingales and related inequalities see e.g. \cite{RYbook,Stroock.book}.

\medskip
{\bf Notation:} As usual, $\mathbb N$ will denote the nonnegative integers, $\mathbb R_+$ will denote $[0,\infty)$ and SMG will abbreviate `submartingale.' Finally, $E[X;A]$ will denote $E(X\, \mathbf{1}_A),$ and inequalities involving conditional expectations will be meant in the a.s. sense.

\section{Improving Doob's inequality}\label{Improve}
Although the proofs of Theorems A and B below are almost identical to the ones of Theorems 5.2.1 and 7.1.9 in \cite{Stroock.book}, we present their brief proofs for the sake of completeness.

The following maximal inequality is precisely Doob's inequality when $a=1$; when $a<1$, however,  Doob's inequality is not available.
\begin{thma}[Improved Doob; discrete] Let $N\ge 0$. Let $(X_n,\mathcal{F}_n,P)_{0\le n\le N}$ be a discrete stochastic process with the last variable satisfying that  $0\le X_N\in L^1(P)$, and assume that
\begin{equation}\label{mild.assum}
E(X_N\mid \mathcal{F}_n)\ge a X_n
\end{equation}
 holds for all $0\le n< N$ with some $0<a$. Then
$$P\left(\max_{0\le n\le N}X_n\ge \alpha\right)\le\frac{1}{\alpha \widetilde{a}}E\left[X_N;\ \max_{0\le n\le N}X_n\ge \alpha\right],\ \alpha>0,$$ where
$\widetilde{a}:=\min\{a,1\}$.
\end{thma}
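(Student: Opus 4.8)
The plan is to mimic the classical proof of Doob's maximal inequality, tracking where the constant $a$ enters. First I would introduce the stopping time
\[
T:=\inf\{0\le n\le N: X_n\ge\alpha\},
\]
with the convention $\inf\emptyset=N$ (or, since we only have finitely many indices, simply work with the events $A_n:=\{X_0<\alpha,\dots,X_{n-1}<\alpha,\ X_n\ge\alpha\}$, which partition $A:=\{\max_{0\le n\le N}X_n\ge\alpha\}$). On $A_n$ we have $X_n\ge\alpha$, so
\[
\alpha\,P(A_n)\le E[X_n;A_n].
\]

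The key step is to replace $X_n$ by $X_N$ inside this expectation using hypothesis \eqref{mild.assum}. Since $A_n\in\mathcal{F}_n$, we get
\[
E[X_n;A_n]\le \frac{1}{a}\,E\bigl[E(X_N\mid\mathcal{F}_n);A_n\bigr]=\frac{1}{a}\,E[X_N;A_n],
\]
and when $n=N$ the inequality $E[X_N;A_N]\le E[X_N;A_N]$ holds trivially, i.e. with constant $1$ rather than $1/a$; this is exactly why the bound involves $\widetilde a=\min\{a,1\}$ rather than $a$. Thus for every $n$ we have $E[X_n;A_n]\le \frac{1}{\widetilde a}E[X_N;A_N]$ (using $X_N\ge0$ so that the extra factor $\frac1a$ versus $1$ only helps when $a<1$; when $a\ge1$ the factor $\frac1a\le1$ is dominated by $1$). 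Summing over $0\le n\le N$ and using that the $A_n$ are disjoint with union $A$,
\[
\alpha\,P(A)=\sum_{n=0}^{N}\alpha\,P(A_n)\le \frac{1}{\widetilde a}\sum_{n=0}^{N}E[X_N;A_n]=\frac{1}{\widetilde a}\,E[X_N;A],
\]
which is the claimed inequality after dividing by $\alpha$.

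The only mild subtlety — the step I expect to need the most care — is bookkeeping the two regimes $a<1$ and $a\ge1$ uniformly: for $n<N$ the estimate produces the factor $1/a$, while the term $n=N$ only produces the factor $1$, so the correct uniform constant is $1/\min\{a,1\}=1/\widetilde a$. Nonnegativity of $X_N$ (hence of all the conditional expectations, and of $X_n$ via \eqref{mild.assum}) is what makes dropping to the single event $A$ and combining the two regimes legitimate; without it one could not freely enlarge the sets of integration. Everything else is the standard Doob argument.
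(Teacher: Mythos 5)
Your proof is correct and takes essentially the same route as the paper's: the same first-passage decomposition into disjoint events $A_n$, the bound $\alpha P(A_n)\le E[X_n;A_n]\le a^{-1}E[X_N;A_n]$ using $A_n\in\mathcal{F}_n$ and the hypothesis, then summing over the partition; the only (inessential) difference is that the paper handles the $n=N$ term by observing that for $a<1$ nonnegativity of $X_N$ makes \eqref{mild.assum} hold at $n=N$ too, while you keep the constants $1/a$ and $1$ separate and take $1/\widetilde a$. Two cosmetic slips worth fixing: your displayed bound ``$E[X_n;A_n]\le \frac{1}{\widetilde a}E[X_N;A_N]$'' should have $E[X_N;A_n]$ on the right, and \eqref{mild.assum} does not imply $X_n\ge 0$ (it only bounds $X_n$ from above by $a^{-1}E(X_N\mid\mathcal{F}_n)$), though your argument never actually needs nonnegativity of $X_n$, only of $X_N$.
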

\begin{proof}
It is enough to treat the case when $a<1$, otherwise one is simply dealing with Doob's inequality.
Define the mutually disjoint events 
\begin{eqnarray*}
& A_0:=\{X_0\ge \alpha\};\\
& A_n:=\{X_n\ge \alpha\ \mathrm{but}\ \max_{0\le m<n} X_m<\alpha\}\in \mathcal{F}_n,\ n=1,2,...
\end{eqnarray*}
Since $a<1$ and $X_N\ge 0$, the bound \eqref{mild.assum} holds even for $n=N$, and thus
$$P\left(\max_{0\le n\le N}X_n\ge \alpha\right)= 
\sum_{n=0}^{N} P(A_n)\le  \sum_{n=0}^{N}\frac{E[X_n;A_n]}{\alpha}
\le\sum_{n=0}^{N}\frac{E[X_N;A_n]}{\alpha a}
\le\frac{1}{\alpha a}E\left[X_N;\ \max_{0\le n\le N}X_n\ge \alpha\right],$$
as claimed.
\end{proof}
\begin{remark}[$L^p$-inequality]
The standard proof of the $L^{p}$-inequality corresponding to Doob's inequality (see eg. Corollary II.1.6 in \cite{RYbook}) now yields the following, slightly modified result. Let $p>1$ and assume that $E(X_i^p)<\infty$ for $1\le i\le N$. If $X^*_N:=\max_{0\le n\le N}X_n$, then 
$$\|X^*_N\|_p\le \frac{1}{\widetilde{a}}\cdot\frac{p}{p-1}\|X_N\|_p,$$
where $\|\cdot\|_p$ is the $L^p$-norm.

\end{remark}

Next, we treat the continuous counterpart.
\begin{thmb}[Improved Doob; continuous] Let $T>0$.
 Let $(Z_t,\mathcal{F}_t,P)_{t\in [0,T]}$ be a right-continuous stochastic process 
 with the last variable satisfying that  $0\le Z_T\in L^1(P)$,  and assume that 
 \begin{equation}\label{mild.assum.cont}
 E(Z_T\mid \mathcal{F}_t)\ge a Z_t
 \end{equation}
  holds for all $0\le t< T$ where $0<a$. Then, for $\alpha>0$,
$$P\left(\sup_{0\le s\le T}Z_s\ge \alpha\right)\le\frac{1}{\alpha \widetilde{a}}E\left[Z_T;\ \sup_{0\le s\le T}Z_s\ge \alpha\right],$$ where $\widetilde{a}:=\min\{a,1\}$.
\end{thmb}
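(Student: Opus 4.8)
The natural approach is to reduce the continuous-time statement to the discrete Theorem A by a countable-dense-approximation argument, exploiting right-continuity. First I would fix $\alpha > 0$ and a countable dense subset $D \subset [0,T]$ with $T \in D$; the canonical choice is $D = \{kT/2^m : 0 \le k \le 2^m,\ m \in \mathbb{N}\}$. For each $m$, let $D_m = \{kT/2^m : 0 \le k \le 2^m\}$ be the $m$-th dyadic partition, so that $D_m \uparrow D$. On the finite set $D_m$ the process $(Z_t, \mathcal{F}_t, P)_{t \in D_m}$ is a finite discrete process with $0 \le Z_T \in L^1$, and the hypothesis \eqref{mild.assum.cont} gives $E(Z_T \mid \mathcal{F}_{t}) \ge a Z_{t}$ for every $t \in D_m$ with $t < T$. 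Hence Theorem A applies verbatim to this finite family, yielding
$$P\left(\max_{t \in D_m} Z_t \ge \alpha\right) \le \frac{1}{\alpha \widetilde{a}} E\left[Z_T;\ \max_{t \in D_m} Z_t \ge \alpha\right].$$

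Next I would pass to the limit $m \to \infty$. Since $D_m \uparrow D$, the events $\{\max_{t \in D_m} Z_t \ge \alpha\}$ are increasing in $m$, and their union is $\{\sup_{t \in D} Z_t \ge \alpha\}$; monotone convergence (for the left side, continuity of $P$ along increasing events; for the right side, dominated convergence with dominating function $Z_T \in L^1$, using that the indicators increase) gives
$$P\left(\sup_{t \in D} Z_t \ge \alpha\right) \le \frac{1}{\alpha \widetilde{a}} E\left[Z_T;\ \sup_{t \in D} Z_t \ge \alpha\right].$$
Finally, right-continuity of $s \mapsto Z_s$ lets me replace the supremum over $D$ by the supremum over all of $[0,T]$: for any $t \in [0,T]$ there is a sequence in $D$ decreasing to $t$, so $Z_t = \lim Z_{s_k} \le \sup_{s \in D} Z_s$, whence $\sup_{[0,T]} Z_s = \sup_{D} Z_s$ pointwise, and the two events coincide. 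This yields the claimed inequality.

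The main obstacle — really the only delicate point — is the interchange of limit and expectation on the right-hand side, together with the measurability of $\sup_{s \in [0,T]} Z_s$. Measurability is fine because right-continuity reduces the supremum to a countable one. For the limit: writing $B_m = \{\max_{t \in D_m} Z_t \ge \alpha\}$, one has $B_m \uparrow B := \{\sup_{s \in D} Z_s \ge \alpha\}$, so $\mathbf{1}_{B_m} \uparrow \mathbf{1}_B$; then $X_N \mathbf{1}_{B_m} \to X_N \mathbf{1}_B$ with $0 \le X_N \mathbf{1}_{B_m} \le X_N \in L^1$, and dominated (or monotone) convergence closes the argument. One should also note the edge case $T$ itself: since $T \in D_m$ for all $m \ge 0$, nothing special is needed there, and the reduction to $n = T$ in the hypothesis (valid when $a < 1$, as in the proof of Theorem A) is inherited automatically. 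Everything else is a routine transcription of the discrete argument.
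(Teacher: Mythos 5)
Your overall strategy is the same as the paper's (apply Theorem A to the dyadic skeletons and pass to the limit using right-continuity), but there is a genuine gap in the limit passage. You assert that the events $B_m:=\{\max_{t\in D_m} Z_t\ge \alpha\}$ increase to $\{\sup_{t\in D} Z_t\ge \alpha\}$. That is false in general: the union $\bigcup_m B_m$ equals $\{\exists\, t\in D:\ Z_t\ge \alpha\}$, which can be strictly smaller than $\{\sup_{t\in D}Z_t\ge\alpha\}$, because the supremum over $D$ may equal $\alpha$ without being attained at any dyadic time. A concrete counterexample: a deterministic continuous path with a single peak of height exactly $\alpha$ at an irrational multiple of $T$, and $Z_t<\alpha$ for all dyadic $t$ (including $t=T$); then $P(\sup_{[0,T]}Z\ge\alpha)=1$ while $P(B_m)=0$ for every $m$, so your monotone-convergence step breaks down and what you actually prove is the inequality for $P(\exists\, t\in D:\ Z_t\ge\alpha)$, a weaker statement. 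The identity $X_m\nearrow X$ implies $\{X_m\ge\alpha\}\uparrow\{X\ge\alpha\}$ only for strict inequalities, i.e. $\{X_m>\alpha\}\uparrow\{X>\alpha\}$; with closed thresholds it can fail.

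The paper avoids this by inserting an auxiliary level $\alpha^*\in(0,\alpha)$: apply Theorem A at level $\alpha^*$, bound the right-hand side by $E\left[Z_T;\ \sup_{0\le s\le T}Z_s\ge\alpha^*\right]$, and then use that $\max_{t\in D_m}Z_t\nearrow \sup_{0\le s\le T}Z_s$ (by right-continuity) together with the strict-inequality events $\left\{\max_{t\in D_m}Z_t>\alpha^*\right\}\uparrow\left\{\sup_{0\le s\le T}Z_s>\alpha^*\right\}$ to get
$$P\left(\sup_{0\le s\le T}Z_s>\alpha^*\right)\le \frac{1}{\alpha^*\widetilde a}\,E\left[Z_T;\ \sup_{0\le s\le T}Z_s>\alpha^*\right].$$
Finally, since $\{\sup Z\ge\alpha\}\subseteq\{\sup Z>\alpha^*\}$ and $\{\sup Z>\alpha^*\}\downarrow\{\sup Z\ge\alpha\}$ as $\alpha^*\uparrow\alpha$, one lets $\alpha^*\uparrow\alpha$ and uses dominated (or monotone) convergence on the expectation to recover the stated inequality with the closed event $\{\sup_{0\le s\le T}Z_s\ge\alpha\}$. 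Your remaining steps (applying Theorem A on each $D_m$, measurability of the supremum via $D$, domination by $Z_T\in L^1$) are fine; you only need to restructure the limit argument along these lines.
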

\begin{proof}
We will write $Z(s)$ instead of $Z_s$ for convenience.
Let $\alpha^*\in (0,\alpha)$.
Let $n\in \mathbb N$ be given and apply Theorem A to the discrete parameter 
process $\left(W_m,\ \mathcal{G}_ {m}, P\right)_{0\le m\le 2^{n}}:=\left(Z\left(\frac{mT}{2^{n}}\right),\ \mathcal{F}_ {\frac{mT}{2^{n}}}, P\right)_{0\le m\le 2^{n}}$ and $N:=2^n$, yielding 
$$P\left(\max_{0\le m \le 2^{n}}W_m\ge \alpha^*\right)\le\frac{1}{\alpha^* \widetilde{a}}E\left[Z_T;\ \max_{0\le m\le 2^{n}}W_m\ge \alpha^*\right]\le \frac{1}{\alpha^* \widetilde{a}}E\left[Z_T;\ \sup_{0\le s\le T}Z_s\ge \alpha^*\right].$$
Exploiting right-continuity, one has 
$$\max_{0\le m \le 2^{n}}W_m=\max_{0\le m \le 2^{n}}Z\left(\frac{mT}{2^{n}}\right)\nearrow\ \sup_{0\le s\le T}Z(s),\ \text{as}\ n\to \infty,\ \text{hence}$$
\begin{eqnarray*}
&P\left(\sup_{0\le s\le T}Z_s>\alpha^*\right)=\
P\left(\lim_n\left\{\max_{0\le m \le 2^{n}}W_m\right\}> \alpha^*\right)
=\lim_n P\left(\max_{0\le m \le 2^{n}}W_m> \alpha^*\right)\\
&\le \frac{1}{\alpha^* \widetilde{a}}E\left[Z_T;\ \sup_{0\le s\le T}Z_s>\alpha^*\right].
\end{eqnarray*}
To complete the proof, let $\alpha^*\uparrow \alpha$, and use monotone convergence for the expectation.
\end{proof}
\begin{remark}[$L^1$ can be dropped] If  the $L^1$-assumption on the last variable fails in the above theorems, then the estimates remain still valid in the sense that the bounds become infinite. In the sequel, we will always use this convention.
\end{remark}

\section{Applications to various processes}\label{Applic}
We now present some useful inequalities which are  applications of Theorems A and B.

\subsection{Application to time series (processes) with step sizes (slopes) bounded from below}
We first consider time series.
 \begin{theorem}[Time series with jump sizes bounded from below]
Let $S=\{S_n\}_{n\ge 0}$ be a sequence of real valued random variables. We may view $S$ as a (not necessarily Markovian) random walk on $\mathbb R$ or as a time series. The only assumption we have about the steps is that $$S_{n+1}-S_n>\ell,\ n\ge 0,\ a.s.,$$ with some $\ell <0$. Then, for $N\ge 0$ and $\alpha\in \mathbb R$, one has
$$P\left(\max_{0\le n\le N}S_n\ge \alpha\right)\le e^{-\alpha+|\ell| N}E\left[e^{S_{N}};\ \max_{0\le n\le N}S_n\ge \alpha\right].$$
\end{theorem}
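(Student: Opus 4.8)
The plan is to reduce the statement to Theorem A by exponentiating $S$ so as to turn the lower bound on increments into the hypothesis \eqref{mild.assum}. First I would set $X_n := e^{S_n}$, which is nonnegative, and aim to apply Theorem A to the process $(X_n, \mathcal{F}_n, P)_{0\le n\le N}$ with $\mathcal{F}_n := \sigma(S_0,\dots,S_n)$ and with the constant $a := e^{-|\ell|N}$ (which is $<1$, so $\widetilde a = a$). The event $\{\max_n S_n \ge \alpha\}$ is exactly $\{\max_n X_n \ge e^\alpha\}$, so Theorem A applied at level $e^\alpha$ will give
$$P\left(\max_{0\le n\le N} S_n \ge \alpha\right) \le \frac{1}{e^\alpha\, a}\, E\left[e^{S_N};\ \max_{0\le n\le N} S_n \ge \alpha\right] = e^{-\alpha + |\ell| N}\, E\left[e^{S_N};\ \max_{0\le n\le N} S_n \ge \alpha\right],$$
which is the desired inequality. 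So everything comes down to verifying the hypothesis $E(X_N \mid \mathcal{F}_n) \ge a X_n$ for $0\le n<N$, together with integrability of $X_N$.

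The key step is the increment estimate. From $S_{m+1} - S_m > \ell = -|\ell|$ a.s.\ we get, for any $n < N$, that $S_N - S_n = \sum_{m=n}^{N-1}(S_{m+1}-S_m) > -|\ell|(N-n) \ge -|\ell|N$, hence $e^{S_N} > e^{-|\ell|N} e^{S_n}$ a.s. Since $\{S_n \le \,\cdot\,\}$-measurable quantities pull out, taking $E(\cdot\mid\mathcal{F}_n)$ and using that $e^{S_n}$ is $\mathcal{F}_n$-measurable yields $E(e^{S_N}\mid \mathcal{F}_n) \ge e^{-|\ell|N} e^{S_n} = a X_n$, which is precisely \eqref{mild.assum}. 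For integrability, the same pointwise bound only gives a lower bound on $X_N$, so $L^1(P)$ of $e^{S_N}$ is not automatic; here I would invoke the convention recorded in the last remark of Section~\ref{Improve}, namely that if $E(e^{S_N}) = \infty$ the asserted inequality holds trivially with an infinite right-hand side. Thus one may assume $e^{S_N}\in L^1(P)$ without loss of generality.

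The only mild subtlety — and the step I expect to need the most care — is the appearance of $N$ inside the constant $a = e^{-|\ell|N}$: the bound $E(X_N\mid\mathcal{F}_n)\ge a X_n$ must hold uniformly in $n$ with a single constant, and the worst case is $n=0$, where the gap $S_N - S_0$ can be as negative as $-|\ell|N$; using $a = e^{-|\ell|N}$ covers all $n$ simultaneously, which is exactly what Theorem A requires. One should also note Theorem A is stated for $0\le n < N$, and the computation above indeed only needs $n<N$. With these points in place the proof is complete.
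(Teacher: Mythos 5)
Your proposal is correct and follows essentially the same route as the paper: exponentiate, take $a=e^{\ell N}=e^{-|\ell|N}$, and apply Theorem A at level $e^{\alpha}$ (the paper states this in one line; you simply spell out the telescoping verification of \eqref{mild.assum} and the convention handling a possibly infinite right-hand side). No gaps.
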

\begin{proof} Let $X_n:=e^{S_{n}}$ for $n\ge 0$ and apply Theorem A for $\alpha':=e^ \alpha$ and $a:=e^{\ell N}$. 
 \end{proof}
A similar application of Theorem B leads to the following continuous version.
\begin{theorem}[Lower bound on slope] Let  $\ell<0$ and
assume that the right-continuous process $(Y_t,\mathcal{F}_t,P)_{t\ge 0}$  satisfies for all $t>s\ge 0$
that $$\frac{Y_{t}-Y_{s}}{t-s}>\ell,\ P-a.s.$$ Then for $T>0$ and  $\alpha\in \mathbb R$,
$$P\left(\sup_{0\le s\le T}Y_s\ge \alpha\right)\le e^{-\alpha+|\ell|T}E\left[e^{Y_{T}};\ \sup_{0\le s\le T}Y_s\ge \alpha\right].$$
\end{theorem}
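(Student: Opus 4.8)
The plan is to mimic the proof of the discrete "Lower bound on slope" theorem (the time series case) by exponentiating and invoking Theorem B. First I would fix $T>0$ and set $Z_t := e^{Y_t}$ for $t \in [0,T]$. Since $Y$ is right-continuous, so is $Z$, and since $x \mapsto e^x$ is increasing, $\sup_{0\le s\le T} Z_s = e^{\sup_{0\le s\le T} Y_s}$, so the event $\{\sup_{0\le s\le T} Y_s \ge \alpha\}$ coincides with $\{\sup_{0\le s\le T} Z_s \ge e^\alpha\}$. The conclusion of Theorem B applied to $Z$ with threshold $\alpha' := e^\alpha$ then reads exactly as the desired inequality once we identify the constant $\widetilde a$, so the whole task reduces to verifying the hypothesis \eqref{mild.assum.cont} for $Z$ with an appropriate $a$.

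The key computation is the slope bound. For $0 \le t < T$, the assumption gives $Y_T - Y_t > \ell (T-t) \ge \ell T = -|\ell| T$ pointwise (using $\ell<0$ and $T-t \le T$), hence $Y_T > Y_t - |\ell|T$, so $e^{Y_T} > e^{-|\ell|T} e^{Y_t}$. Taking conditional expectation given $\mathcal F_t$ preserves the inequality: $E(Z_T \mid \mathcal F_t) \ge e^{-|\ell|T} Z_t$. Thus \eqref{mild.assum.cont} holds with $a := e^{-|\ell|T}$. Since $\ell<0$, we have $|\ell|T>0$, so $a = e^{-|\ell|T} \in (0,1)$, and therefore $\widetilde a = \min\{a,1\} = e^{-|\ell|T}$, i.e.\ $\frac{1}{\widetilde a} = e^{|\ell|T}$. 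Also $Z_T = e^{Y_T} \ge 0$; if $Z_T \in L^1(P)$ we are in the setting of Theorem B directly, and if not, the "$L^1$ can be dropped" remark makes the stated bound trivially valid (infinite right-hand side), so there is no loss in assuming $e^{Y_T} \in L^1(P)$.

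Plugging into Theorem B with $\alpha' = e^\alpha > 0$ yields
$$P\left(\sup_{0\le s\le T} Z_s \ge e^\alpha\right) \le \frac{1}{e^\alpha \widetilde a} E\left[Z_T;\ \sup_{0\le s\le T} Z_s \ge e^\alpha\right] = e^{-\alpha + |\ell|T} E\left[e^{Y_T};\ \sup_{0\le s\le T} Y_s \ge \alpha\right],$$
where in the last step I translated both the probability and the event back through the monotone bijection $x \mapsto e^x$, using $\{\sup_s Z_s \ge e^\alpha\} = \{\sup_s Y_s \ge \alpha\}$. This is exactly the claimed inequality.

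I do not expect a genuine obstacle here; the proof is a short reduction. The only point requiring a little care is the direction and uniformity of the slope inequality: one must note that the hypothesis is assumed for \emph{all} pairs $t > s \ge 0$ simultaneously $P$-a.s.\ (so the single null set can be discarded), and that the crude bound $T - t \le T$ is what produces a constant $a$ independent of $t$ — a sharper, $t$-dependent bound would not fit the format of Theorem B, which requires one fixed $a$. A secondary bookkeeping point is the strict versus non-strict inequalities ($>$ in the slope hypothesis versus $\ge$ in \eqref{mild.assum.cont}), but this is immaterial since a strict a.s.\ inequality certainly implies the non-strict one after taking conditional expectations.
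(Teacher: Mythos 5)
Your proof is correct and follows essentially the same route as the paper, which proves the continuous version exactly as you do: exponentiate ($Z_t:=e^{Y_t}$), note $Y_T-Y_t>\ell(T-t)\ge \ell T$ so that \eqref{mild.assum.cont} holds with $a=e^{\ell T}=e^{-|\ell|T}$, and apply Theorem B at the threshold $e^{\alpha}$ (the paper literally states it as ``a similar application of Theorem B'' to its discrete time-series proof). Your bookkeeping about the $L^1$ convention and the uniform choice of $a$ matches the paper's conventions, so there is nothing to add.
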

\subsection{Application to processes with independent increments} 
If the right-continuous process $(Z_t,\mathcal{F}_t,P)$ on $[0,T]$ has independent increments, then
$$\frac{E(e^{Z_{T}}\mid \mathcal{F}_s)}{e^{Z_{s}}}=E\left(e^{Z_T-Z_s}\mid \mathcal{F}_s\right)=E\left(e^{Z_T-Z_s}\right).$$
Let $$a:=\inf_{0\le s\le T}E(e^{Z_T-Z_s}),$$ and note that clearly $a\le 1$. If $0<a$, then the conditions of Theorem B are satisfied for the process $\widehat Z:=e^Z$.
Therefore, we have 
\begin{theorem}[Independent increments]
If the right-continuous process $(Z_t,\mathcal{F}_t,P)$ on $[0,T]$ has independent increments, and $a:=\inf_{0\le s\le T}E(e^{Z_T-Z_s})>0,$ then for $\alpha\in\mathbb R$,
$$P\left(\sup_{0\le s\le T}Z_s\ge \alpha\right)
\le  \frac {e^{-\alpha}}{a}\, E\left[e^{Z_{T}};\ \sup_{0\le s\le T}Z_s\ge \alpha\right].$$
\end{theorem}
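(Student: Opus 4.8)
The plan is to obtain this as an immediate corollary of Theorem B applied to the exponentiated process, so the work is almost entirely in verifying the hypotheses. First I would set $\widehat Z_t := e^{Z_t}$ and note that $\widehat Z$ is right-continuous because $Z$ is and $x\mapsto e^x$ is continuous, and that $\widehat Z_t > 0$ for every $t$. The last variable $\widehat Z_T = e^{Z_T}$ lies in $L^1(P)$ precisely when $E(e^{Z_T})<\infty$; by the ``$L^1$ can be dropped'' convention, if this fails the asserted bound is $+\infty$ and there is nothing to prove, so I may assume $e^{Z_T}\in L^1(P)$.

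Next I would establish the key inequality $E(\widehat Z_T\mid\mathcal F_t)\ge a\,\widehat Z_t$ for $0\le t<T$. For this, fix $t$ and write
$$
\frac{E(e^{Z_T}\mid\mathcal F_t)}{e^{Z_t}} = E\!\left(e^{Z_T - Z_t}\,\big|\,\mathcal F_t\right) = E\!\left(e^{Z_T - Z_t}\right),
$$
where the first equality uses $\mathcal F_t$-measurability of $e^{Z_t}$ and the second uses that $Z_T - Z_t$ is independent of $\mathcal F_t$, which is exactly the independent-increments hypothesis. Since by definition $a=\inf_{0\le s\le T}E(e^{Z_T-Z_s})$, the right-hand side is $\ge a$, hence $E(e^{Z_T}\mid\mathcal F_t)\ge a\,e^{Z_t}$ pointwise a.s. Together with the standing assumption $a>0$, the hypotheses of Theorem B are met for $\widehat Z$ with this value of $a$. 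One should also record that $a\le 1$ automatically: taking $s=T$ gives $E(e^{Z_T-Z_T})=E(1)=1$ in the infimum, so $\widetilde a = \min\{a,1\} = a$, which is why the factor in the conclusion is $1/a$ rather than $1/\widetilde a$.

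Finally I would apply Theorem B to $\widehat Z$ with threshold $e^{\alpha}$ in place of $\alpha$: since $\{\sup_{0\le s\le T} Z_s \ge \alpha\} = \{\sup_{0\le s\le T} \widehat Z_s \ge e^{\alpha}\}$ by monotonicity of the exponential, Theorem B yields
$$
P\!\left(\sup_{0\le s\le T} Z_s \ge \alpha\right) \le \frac{1}{e^{\alpha}\,a}\,E\!\left[e^{Z_T};\ \sup_{0\le s\le T} Z_s \ge \alpha\right],
$$
which is precisely the claim after rewriting $1/e^{\alpha}$ as $e^{-\alpha}$. I do not anticipate a genuine obstacle here; the only points requiring a little care are the handling of the non-$L^1$ case via the stated convention and the justification that $Z_T - Z_t$ being independent of $\mathcal F_t$ (rather than merely of $Z_t$) is what ``independent increments'' delivers — if one only had independence of the increment from the endpoint value $Z_t$, the conditioning step would fail, so this is the subtle spot to state cleanly.
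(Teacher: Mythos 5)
Your proposal is correct and follows essentially the same route as the paper: exponentiate the process, use independence of $Z_T-Z_t$ from $\mathcal F_t$ to get $E(e^{Z_T}\mid\mathcal F_t)\ge a\,e^{Z_t}$, note $a\le 1$ so $\widetilde a=a$, and apply Theorem B at level $e^{\alpha}$, with the infinite right-hand side handled by the stated convention. No gaps.
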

\begin{remark}
If the righthand side is infinite, we still consider the bound valid in the broader sense, and therefore we do not assume any moment condition on $Z_T$.
\end{remark}
As a particular discrete case (of  Theorem A), we let $(S_n,\mathcal{F}_n,P)_{0\le n\le N}$ be a a random walk on $\mathbb Z$ with $S_0=0$. Let the steps $Y_n:=S_{n+1}-S_n$ be independent,  and define $\phi_i:=Ee^{Y_i}$ and $\pi_n:=\Pi_{i=n}^{N-1} \phi_i$. Choosing $$a:=\min_{0\le n\le N}E(e^{S_N-S_n})=\min_{0\le n\le N}\pi_{n},$$ we obtain
\begin{corollary}[Random walks with time-inhomogeneous steps]
For $\alpha\in\mathbb R$,
$$P\left(\max_{0\le n\le N}S_n\ge \alpha\right)
\le  e^{-\alpha}\left(\max_{0\le n\le N}\pi_{n}^{-1}\right)\, E\left[e^{S_N};\ \max_{0\le n\le N}S_n\ge \alpha\right].$$
\end{corollary}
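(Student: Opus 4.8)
The corollary is an immediate specialization of Theorem A to the exponential of a random walk, so the plan is to verify that the hypotheses of Theorem A hold for the process $X_n := e^{S_n}$ with the stated choice of $a$, and then to translate the conclusion back. First I would set $X_n := e^{S_n}$ for $0 \le n \le N$; then $X_N = e^{S_N} \ge 0$, and we adopt the convention from the remark that if $X_N \notin L^1(P)$ the bound is vacuously valid, so no moment hypothesis is needed. Next I would check the key inequality \eqref{mild.assum}: since the steps $Y_i = S_{i+1} - S_i$ are independent, for $0 \le n < N$ we have, using $S_N - S_n = Y_n + \cdots + Y_{N-1}$,
$$E(X_N \mid \mathcal{F}_n) = e^{S_n}\, E\!\left(e^{S_N - S_n} \mid \mathcal{F}_n\right) = e^{S_n}\prod_{i=n}^{N-1} E(e^{Y_i}) = \pi_n\, X_n \ge \Big(\min_{0 \le m \le N}\pi_m\Big) X_n = a X_n,$$
where I use the independence of the increments from $\mathcal{F}_n$ and then factor the expectation of the product into the product $\pi_n$ of the one-step moment generating functions $\phi_i$. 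This confirms \eqref{mild.assum} with $a := \min_{0 \le n \le N}\pi_n > 0$ (positivity being part of the stated hypothesis, implicit in writing $a$).

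Then I would apply Theorem A directly. Observing that $\{\max_{0\le n\le N} S_n \ge \alpha\} = \{\max_{0\le n\le N} X_n \ge e^\alpha\}$ because $x \mapsto e^x$ is strictly increasing, Theorem A with threshold $\alpha' := e^\alpha > 0$ gives
$$P\!\left(\max_{0\le n\le N} S_n \ge \alpha\right) = P\!\left(\max_{0\le n\le N} X_n \ge e^\alpha\right) \le \frac{1}{e^\alpha\,\widetilde a}\, E\!\left[X_N;\ \max_{0\le n\le N} X_n \ge e^\alpha\right],$$
and rewriting $X_N = e^{S_N}$ and the event back in terms of $S$ yields the claimed bound with $\widetilde a^{-1} = \max\{1, a^{-1}\}$. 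Since each $\pi_m$ is a product of terms $\phi_i = E e^{Y_i}$, there is no a priori reason for $\phi_i$ to be $\ge 1$, so $a$ need not be $\le 1$; hence I should keep $\widetilde a = \min\{a,1\}$ rather than collapse it. However, if one also knows each $\phi_i \ge 1$ (e.g.\ when $E Y_i = 0$, by Jensen), then $a \le 1$, $\widetilde a = a = \min_n \pi_n$, and $\widetilde a^{-1} = \max_n \pi_n^{-1}$, which is exactly the form written in the statement; I would note this briefly, or simply state the bound with $\max_n \pi_n^{-1}$ interpreted as $\max\{1, \max_n \pi_n^{-1}\}$ in general.

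The argument is essentially bookkeeping, so there is no real obstacle; the only point requiring a line of care is the factorization $E(e^{S_N - S_n} \mid \mathcal{F}_n) = \prod_{i=n}^{N-1}\phi_i$, which uses both that the increments are independent of $\mathcal{F}_n$ and that they are mutually independent of each other — a standard consequence of the independence assumption on the steps. A second minor point is making explicit that "$a > 0$" is genuinely a hypothesis here (it holds automatically, for instance, whenever every $\phi_i$ is finite and positive, i.e.\ each $Y_i$ has a finite exponential moment), so that Theorem B's "infinite bound" convention is not strictly needed for the left side but only guards the right side. Apart from that, the proof is the one-line application indicated.
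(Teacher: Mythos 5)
Your proof is correct and follows essentially the same route as the paper: the corollary is obtained by applying Theorem A (equivalently, the independent-increments computation $E(e^{S_N}\mid\mathcal{F}_n)=\pi_n e^{S_n}$) to $X_n:=e^{S_n}$ with threshold $e^\alpha$ and $a:=\min_{0\le n\le N}\pi_n$. Your hedge about $\widetilde a$ is unnecessary: since the index range includes $n=N$ and $\pi_N$ is the empty product equal to $1$, one automatically has $a\le 1$, so $\widetilde a=a$ and $\widetilde a^{-1}=\max_{0\le n\le N}\pi_n^{-1}$ exactly as stated.
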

\subsection{Application to L\'evy-processes}
Assuming a little more than just independent increments, namely that $Z$ is  a L\'evy-process, we can get even more appealing estimates.

Recall that a process $Z$ with independent stationary increments is called a L\'evy-process if $Z_0\equiv 0$ and it is continuous in probability, in which case it has a version with almost surely $c\grave{a}dl\grave{a}g$ paths. By the L\'evy-Khintchine Theorem, the distribution of a L\'evy process is characterized by having a specific form for the log-characteristic function, namely
 $$\Psi_t(\theta):=\log \left(E(e^{i\theta Z_{t}})\right)=t\left(ib\theta-\frac{1}{2}\sigma^2 \theta^2-\int_{-\infty}^{\infty}\left(1-e^{i\theta x}+i\theta h(x)\right)\,\Lambda(\mathrm{d}x)\right),\ \theta\in\mathbb R,$$ where  $h(x):=x\mathbf{1}_{|x|\le 1}$,   $\sigma\ge 0$, $b\in\mathbb{R},$ and $\Lambda$ is a measure (called `L\'evy measure') supported on $\mathbb R\setminus \{0\}$ satisfying that $\int_{\mathbb R}\min\{1,x^2\}\, \Lambda(\mathrm{d}x)<\infty$.
The parameters $\sigma,b$ and $\Lambda$ are called the {\it characteristic triple}. 
(For  more background on L\'evy processes, see e.g. \cite{Kypr.book}.)

Since  for $T>0$,
$$a:=\inf_{0\le s\le T}E\left(e^{Z_T-Z_s}\right)=\inf_{0\le s\le T}E\left(e^{Z_{T-s}}\right)=
\inf_{0\le s\le T}\left(E(e^{Z_{1}})\right)^{T-s},$$
  the infimum is either at $0$ or at $T$, and furthermore, denoting $\gamma:=E\left(e^{Z_{1}}\right)\in (0,\infty]$,
\begin{itemize}
\item[$\bullet$] $a=1$ when  $\gamma\ge 1$;
\item[$\bullet$] $a=\gamma^T$  when $\gamma\le 1$.
\end{itemize}
We have obtained that
$a=\min\{1,Ee^{Z_{T}}\}$,
which leads to the the following result. (Again, the righthand sides of the bounds are allowed to be infinite, and so we make no moment assumptions on $Z_T$.)
\begin{theorem}[L\'evy processes]
Consider a L\'evy process $(Z_t,\mathcal{F}_t,P)$ on $[0,T]$ and let $\alpha\in\mathbb R$. Then
$$P\left(\sup_{0\le s\le T}Z_s\ge \alpha\right)\le  e^{-\alpha}\frac{ E\left[e^{Z_{T}};\ \sup_{0\le s\le T}Z_s\ge \alpha\right]}{\min\{1,Ee^{Z_{T}}\}}.$$
In particular, 
$$P\left(\sup_{0\le s\le T}Z_s\ge \alpha\right)\le  e^{-\alpha}\max\{1,Ee^{Z_{T}}\}.$$\end{theorem}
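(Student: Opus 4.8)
The plan is to derive the L\'evy-process theorem directly from Theorem B applied to the exponential process $\widehat Z := e^Z$, exactly as the preceding subsection did for processes with independent increments. First I would observe that a L\'evy process has independent increments and a c\`adl\`ag version, so the right-continuity hypothesis of Theorem B is met; working with that version costs nothing. Then, with $X_t := e^{Z_t}$, the quantity controlling the inequality is $a := \inf_{0\le s\le T} E(e^{Z_T - Z_s})$. Using stationarity of increments, $Z_T - Z_s$ has the same law as $Z_{T-s}$, and independence/stationarity give $E(e^{Z_{T-s}}) = \left(E(e^{Z_1})\right)^{T-s}$; this is the key structural computation, and it is the one place where the specific L\'evy structure (beyond mere independent increments) is used.

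Next I would analyze $a = \inf_{0\le s\le T} \gamma^{T-s}$ where $\gamma := E(e^{Z_1}) \in (0,\infty]$. Since $u \mapsto \gamma^u$ is monotone in $u$, the infimum over $u = T-s \in [0,T]$ is attained at an endpoint: if $\gamma \ge 1$ the minimum is $\gamma^0 = 1$, and if $\gamma \le 1$ the minimum is $\gamma^T = E(e^{Z_T})$ (the last identity again by stationarity/independence). In both cases this says $a = \min\{1, E(e^{Z_T})\}$. Note $a > 0$ automatically — either $a=1$ or $a = \gamma^T > 0$ — so the positivity hypothesis of Theorem B is free, and moreover $\widetilde a = \min\{a,1\} = a$ since $a \le 1$ always. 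I would then invoke Theorem B with this $a$ and with threshold $e^\alpha$ in place of $\alpha$: since $\{\sup_s Z_s \ge \alpha\} = \{\sup_s e^{Z_s} \ge e^\alpha\}$ (the exponential is increasing and continuous), Theorem B yields
\[
P\left(\sup_{0\le s\le T} Z_s \ge \alpha\right) \le \frac{1}{e^\alpha\, a}\, E\left[e^{Z_T};\ \sup_{0\le s\le T} Z_s \ge \alpha\right],
\]
which is the first displayed bound once $a$ is written as $\min\{1, E e^{Z_T}\}$. For the ``in particular'' statement I would simply bound $E[e^{Z_T}; \cdot] \le E[e^{Z_T}]$ and use $E e^{Z_T}/\min\{1, E e^{Z_T}\} = \max\{1, E e^{Z_T}\}$, together with the convention from the earlier remark that if $E e^{Z_T} = \infty$ the bound is trivially valid.

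I do not anticipate a serious obstacle here; the argument is essentially bookkeeping on top of Theorem B. The one point requiring a little care is the moment issue: $\gamma = E(e^{Z_1})$ may be $+\infty$, in which case $a = 1$ and the first bound still makes sense (the right-hand side may be $+\infty$), and I would explicitly lean on the stated convention that the estimates are interpreted in the broader sense when the $L^1$ hypothesis on the last variable fails, so that no moment assumption on $Z_T$ is needed. A secondary subtlety worth a sentence is the measurability of $\sup_{0\le s\le T} Z_s$ and the validity of $\max_{0\le m\le 2^n} W_m \nearrow \sup_{0\le s\le T} Z_s$; but this is exactly the right-continuity argument already carried out inside the proof of Theorem B, so I would just cite that and not repeat it.
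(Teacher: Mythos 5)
Your proposal is correct and follows essentially the same route as the paper: apply Theorem B (via the independent-increments computation) to $e^{Z}$, use stationarity to write $E(e^{Z_T-Z_s})=\gamma^{T-s}$ with $\gamma=E(e^{Z_1})$, note the infimum sits at an endpoint so $a=\min\{1,Ee^{Z_T}\}$, and then read off both bounds, with the infinite-moment case handled by the stated convention. No gaps.
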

\begin{remark}[Exponential moments]
When $a=1$, that is, $\gamma\ge 1$, the theorem is simply an exponential Doob's inequality. (For example, that is the case for standard Brownian motion.) Nonetheless, when $\gamma< 1$, one obtains a new inequality. 

Assume now that $Ee^{\theta Z_1}<\infty$  for all $\theta\in\mathbb R$. (For example, let $Z_1$ have compound Poisson distribution, such that all exponential moments of the step distribution are finite.) Then, by standard Laplace-transform theory, $M(w):=Ee^{w Z_1}$ is also well defined for all $w\in\mathbb C$, and in this case,  $M(i\theta)=E(e^{i\theta Z_{1}})=e^{\Psi_1(\theta)}$ for $\theta\in \mathbb R$.
Thus, $\gamma =e^{\Psi_1(-i)}<1$ is equivalent to
$$b< -\frac{\sigma^2}{2}+\int_{-\infty}^{\infty}(e^x-1-h(x))\,\Lambda(\mathrm{d}x),$$ 
where $(\sigma,b,\Lambda)$ is the characteristic triple. (Since $\Psi_1(-i)$ is well defined,
$0\le \int_{-\infty}^{\infty}(e^x-1-h(x))\,\Lambda(\mathrm{d}x)<\infty$ must hold.)

\end{remark}

\subsection{Application to subcritical branching processes}
Let $(Z_t)_{t\ge 0}$ be a subcritical branching process, with mean offspring number $0<\mu<1$, and with exponential branching clock with rate $b>0$.

Recall that this means that we start with a single ancestor, and any individual has $X=0,1,2,...$ offspring with corresponding probabilities $p_0,p_1,p_2,...$ (we assume $p_0<1$) and  branching occurs at exponential times with rate $b>0$. Let $h$ be the generating function of the offspring distribution,
$$h(z) :=Ez^{X}= p_0 + p_1 z + p_2 z^2 + ....$$
Then, $$h'(1)=p_1 +2p_2 +3p_3 +...=\mu>0,$$
and subcriticality means that we assume that $\mu<1$.

Suppose  further, that all the offspring of the original single individual also give birth to a random number of offspring, according to the law of $X$,  their offspring do the same as well, and continue this in an inductive manner, assuming that all these mechanisms are independent of each other. 

Let $Z_n$ denote the size of the $n$th generation for $n\ge 0$. (We set $Z_0=1$, as we start with a single particle.) 
The generating function of $Z_n$  satisfies
\begin{equation}\label{iteration}
Ez^{Z_{n}} =h(h(...(z)...)),\ n\ge 1,
\end{equation}
where on the right-hand side one has precisely the $n$th iterate of the function $h$.

 Let $m:=\mu-1\in(-1,0)$.
Since, by the branching property, $E(Z_T\mid Z_s)=e^{bm(T-s)} Z_s$ for $T>s$, we pick $a=e^{bmT} $ and obtain that
\begin{theorem}[Subcritical branching processes]\label{subcr.br}
For $\alpha,T>0$,
$$P\left(\sup_{0\le s\le T}Z_s\ge \alpha\right)\le\alpha^{-1} e^{-bmT}E\left[Z_T;\ \sup_{0\le s\le T}Z_s\ge \alpha\right].$$ 
\end{theorem}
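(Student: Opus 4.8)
The plan is to apply Theorem~B directly to the process $Z$ itself, with the choice $a:=e^{bmT}$. Three points need checking: that $Z$ has a right-continuous version, that $0\le Z_T\in L^1(P)$, and that the mild assumption \eqref{mild.assum.cont} holds with this $a$. The first is standard: a continuous-time Markov branching process with exponential branching clock is a pure-jump Markov process on $\mathbb N$, and hence admits a c\`adl\`ag (in particular right-continuous) modification, with which we work throughout. The second will follow from the mean computation below. So the real content of the proof is the verification of \eqref{mild.assum.cont}.

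The key input is the exponential decay of the mean. Writing $E_1$ for expectation when started from a single ancestor, I would first show that $E_1 Z_t=e^{bmt}$ for all $t\ge 0$. This can be obtained by conditioning on the time and outcome of the first branching event (or by differentiating the generating-function semigroup at $z=1$), which gives that $M(t):=E_1 Z_t$ solves the linear ODE $M'(t)=b(\mu-1)M(t)=bm\,M(t)$ with $M(0)=1$; alternatively, one may simply note that $e^{-bmt}Z_t$ is a nonnegative martingale of constant expectation $1$. In particular $E_1 Z_T=e^{bmT}<\infty$, which settles integrability. Next, by the branching property, conditionally on $\mathcal F_t$ the population at time $T$ is a sum of $Z_t$ independent copies of $Z_{T-t}$ each started from one particle, so
$$E(Z_T\mid\mathcal F_t)=Z_t\cdot E_1 Z_{T-t}=Z_t\,e^{bm(T-t)},\qquad 0\le t<T.$$
Since $bm<0$, the map $t\mapsto e^{bm(T-t)}$ is increasing on $[0,T]$, hence at least its value $e^{bmT}$ at $t=0$; as $Z_t\ge 0$ this yields $E(Z_T\mid\mathcal F_t)\ge e^{bmT}Z_t=aZ_t$, which is exactly \eqref{mild.assum.cont}.

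Finally, because $b>0$, $m<0$ and $T>0$ we have $a=e^{bmT}<1$, so $\widetilde a=\min\{a,1\}=e^{bmT}$, and Theorem~B applied on $[0,T]$ gives
$$P\left(\sup_{0\le s\le T}Z_s\ge\alpha\right)\le\frac{1}{\alpha\,e^{bmT}}\,E\left[Z_T;\ \sup_{0\le s\le T}Z_s\ge\alpha\right]=\alpha^{-1}e^{-bmT}\,E\left[Z_T;\ \sup_{0\le s\le T}Z_s\ge\alpha\right],$$
as claimed. The only non-routine step is the justification of $E_1 Z_t=e^{bmt}$ together with the branching-property form of the conditional expectation; once these are in hand, everything else is an immediate invocation of Theorem~B.
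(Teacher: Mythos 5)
Your proposal is correct and follows essentially the same route as the paper: it uses the branching property to get $E(Z_T\mid\mathcal F_t)=e^{bm(T-t)}Z_t\ge e^{bmT}Z_t$ and then applies Theorem~B with $a=e^{bmT}$. The extra details you supply (the c\`adl\`ag modification, the mean computation $E_1Z_t=e^{bmt}$, and integrability of $Z_T$) are sound and merely make explicit what the paper leaves implicit.
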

Note: The righthand side is of course bounded by $\alpha^{-1}$ for any $T$ and $\mu<1$, in accordance with the fact that for the $\mu=1$ case, Doob's inequality gives precisely the  $\alpha^{-1}$bound. But if $\alpha$ is large relative to $T$, our bound is much tighter, as the expectation term tends to zero as $\alpha\to\infty$.
\begin{remark}[CSBP's]
For a \emph{continuous state branching process} (CSBP) $X$  with branching mechanism $\beta u-k u^2$ with $\beta<0,\  k>0$, 
we get, by a similar argument, that
$$P\left(\sup_{0\le s\le T}X_s\ge \alpha\right)\le\alpha^{-1} e^{-\beta T}E\left[X_T;\ \sup_{0\le s\le T}X_s\ge \alpha\right],\ \alpha>0.$$ 
A CSBP  can  be thought of as the total mass of a superprocess, see \cite{Kypr.book}
for background on CSBP's. 
\end{remark}
\subsection{Application to time-homogeneous Markov processes}
If $X$ is a time-homogenous Markov process, then  condition \eqref{mild.assum.cont} becomes
$$E_{X_s}(X_T)\ge a X_s,\ s\in[0,T]$$ where $a=a(T)>0$.

Besides the case of the branching process, this inequality is also satisfied, for example, by a geometric Brownian motion $S$ solving the stochastic differential equation
$$\mathrm{d}S_t=\mu S_t \,\mathrm{d}t+\sigma S_t\,\mathrm{d}W_t,$$ 
with $S_0=z>0$.
Here   $\mu\in \mathbb R$, $\sigma>0$, while $W$ is a standard Brownian motion. Indeed,
$$E_{S_{s}} (S_T)=S_se^{\mu (T-s)}\ge aS_s,\ 0\le s\le T,$$
where $a:=1$ for $\mu\ge 0$ and $a:=e^{\mu T}$ for $\mu<0$.

In the latter case for instance, we obtain the following bound.
\begin{theorem}[GBM; $\mu<0$]\label{GBMthm}
Assume that the geometric Brownian motion $S$ has drift $\mu<0$ and $S_0=z$. Then,  for $\alpha>z$,
$$P_z\left(\sup_{t>0}S_t\ge \alpha\right)\le \frac{z}{\alpha}.$$
\end{theorem}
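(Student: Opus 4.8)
The plan is to apply Theorem B to the geometric Brownian motion $S$ on a fixed finite interval $[0,T]$ with the choice $a:=e^{\mu T}$, obtain the resulting maximal inequality on $[0,T]$, and then let $T\to\infty$. First I would verify the hypotheses of Theorem B: the process $S$ is continuous (hence right-continuous), $S_T\ge 0$ with $E_z(S_T)=ze^{\mu T}<\infty$, so $S_T\in L^1(P_z)$, and by the Markov property together with the explicit mean $E_{S_s}(S_T)=S_s e^{\mu(T-s)}$ one has $E_z(S_T\mid\mathcal F_s)=S_s e^{\mu(T-s)}\ge e^{\mu T}S_s$ for $0\le s\le T$ (here $\mu<0$ makes $e^{\mu(T-s)}\ge e^{\mu T}$). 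Thus $\widetilde a=\min\{e^{\mu T},1\}=e^{\mu T}$, and Theorem B gives, for $\alpha>0$,
\begin{equation*}
P_z\!\left(\sup_{0\le s\le T}S_s\ge\alpha\right)\le\frac{e^{-\mu T}}{\alpha}\,E_z\!\left[S_T;\ \sup_{0\le s\le T}S_s\ge\alpha\right]\le\frac{e^{-\mu T}}{\alpha}\,E_z(S_T)=\frac{e^{-\mu T}}{\alpha}\cdot z e^{\mu T}=\frac{z}{\alpha}.
\end{equation*}

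The remaining step is to remove the restriction to a finite horizon. Since $s\mapsto\sup_{0\le u\le s}S_u$ is nondecreasing, the events $\{\sup_{0\le s\le T}S_s\ge\alpha\}$ increase to $\{\sup_{t>0}S_t\ge\alpha\}$ as $T\uparrow\infty$; by continuity of $P_z$ from below,
\begin{equation*}
P_z\!\left(\sup_{t>0}S_t\ge\alpha\right)=\lim_{T\to\infty}P_z\!\left(\sup_{0\le s\le T}S_s\ge\alpha\right)\le\frac{z}{\alpha},
\end{equation*}
the bound $z/\alpha$ being uniform in $T$. This proves the claim; the hypothesis $\alpha>z$ is not needed for the argument, though of course the inequality is only informative in that range since otherwise $z/\alpha\ge 1$.

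The only point requiring a little care — and the one I would flag as the main (minor) obstacle — is the uniformity of the bound in $T$: a naive application of Theorem B yields the prefactor $e^{-\mu T}/\alpha$, which blows up as $T\to\infty$, so it is essential to first collapse the expectation $E_z[S_T;\cdot]$ against $E_z(S_T)=ze^{\mu T}$ so that the $T$-dependent factors cancel before passing to the limit. Everything else is a routine verification of the Markov/martingale-mean identity for geometric Brownian motion and an application of monotone continuity of the probability measure.
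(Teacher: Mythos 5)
Your core argument is the same as the paper's: apply Theorem B on $[0,T]$ with $a=e^{\mu T}$, bound $E_z[S_T;\cdot]\le E_z(S_T)=ze^{\mu T}$ so that the $T$-dependent factors cancel, and then pass to the infinite horizon. That part is fine. The gap is in the final limiting step. You claim that the events $\{\sup_{0\le s\le T}S_s\ge\alpha\}$ increase to $\{\sup_{t>0}S_t\ge\alpha\}$ and invoke continuity from below. Since $S$ is continuous, $\sup_{0\le s\le T}S_s$ is attained on $[0,T]$, so the union over $T$ of these events is exactly $\{\exists\, t:\ S_t\ge\alpha\}$, i.e.\ the event that the level $\alpha$ is reached at some \emph{finite} time. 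This is in general a proper subset of $\{\sup_{t>0}S_t\ge\alpha\}$: a path with $S_t<\alpha$ for all $t$ but $\sup_t S_t=\alpha$ (the level approached only as $t\to\infty$) lies in the latter but in none of the finite-horizon events. So the identity
$P_z(\sup_{t>0}S_t\ge\alpha)=\lim_T P_z(\sup_{0\le s\le T}S_s\ge\alpha)$
does not follow from monotone continuity alone. For this particular process the exceptional event is null (because $S_t\to 0$ a.s.\ when $\mu<0$), but you neither state nor prove that, and importing it would be extra machinery.

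The paper's proof avoids exactly this issue with a standard device you should add: fix $\alpha^*\in(z,\alpha)$ and work with the event $\{\exists\, t>0:\ S_t>\alpha^*\}$, which \emph{is} the increasing union of $\{\sup_{0\le s\le T}S_s>\alpha^*\}$, and note $\{\sup_{t>0}S_t\ge\alpha\}\subseteq\{\exists\, t:\ S_t>\alpha^*\}$. Your cancellation argument gives the uniform bound $z/\alpha^*$ for every finite horizon, hence $P_z(\sup_{t>0}S_t\ge\alpha)\le z/\alpha^*$, and letting $\alpha^*\uparrow\alpha$ finishes the proof. With this one-line repair your argument coincides with the paper's; the rest of your verification of the hypotheses of Theorem B and the remark that $\alpha>z$ is only needed for the bound to be informative are both correct.
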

\begin{proof} Let $\alpha^*\in (z,\alpha).$ Using continuity, $$P_z\left(\exists t>0:\ S_t> \alpha^*\right)=\lim_{T\to\infty}P_z\left(\max_{0\le t\le T}S_t>\alpha^*\right).$$ Now, Theorem B along with the previous comments yields for  $T>0$, that
$$P_z\left(\max_{0\le t\le T}S_t\ge  \alpha^*\right)\le \frac{E_z(S_T)}{a\alpha^*}=\frac{ze^{\mu T}}{e^{\mu T}\alpha^*}=\frac{z}{\alpha^*},$$  
hence
$P_z\left(\exists t>0:\ S_t> \alpha^*\right)\le\frac{z}{\alpha^*},$
and we are done by letting $\alpha^*\uparrow \alpha$.
 \end{proof}
For some related results on geometric Brownian motion, see \cite{GP}.

\section{Application to proving limits}
\subsection{Almost sure convergence}
The following situation is typical for limit theorems. Suppose that one is working with  a process that is defined for continuous times, and wishes to prove a limit theorem for large times. Often, one then must  go through a rather unpleasant two-step procedure consisting of 
\begin{itemize}
\item working with a `discrete time skeleton' first, 
\item upgrading the result to all times next.
\end{itemize}
 (A classic paper addressing these kind of issues is \cite{K1963}.) The following result offers a method to solve this problem. 
 \begin{theorem}[Almost sure convergence]\label{a.s.conv}
Let $(X_t,\mathcal{F}_s,P)_{t\ge 0}$ be a nonnegative real valued, filtered stochastic process, such that $EX_t<\infty$ for all $t\ge 0$. Assume that for a sufficiently small $T>0$ the following holds:
\begin{enumerate}
\item there is an $a\in (0,1]$ such that
$$E(X_{nT+t}\mid \mathcal{F}^{(n)}_s)\ge a X_{nT+s},\ \forall n\ge 1,\ \forall\ 0\le s<t<T,$$
where $\mathcal{F}^{(n)}_s:=\sigma(X_{nT+r}:\ r\in[0,s))$;
\item $\sum_{n} EX_{nT}<\infty$.
\end{enumerate}
Then $\lim_{t\to\infty} X_t=0$ holds $P$-a.s.
\end{theorem}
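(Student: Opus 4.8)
The strategy is to show that $X_t \to 0$ along the discrete skeleton $\{nT : n \ge 0\}$ first, and then use the continuous-parameter maximal inequality (Theorem B) applied to each block $[nT, (n+1)T]$ to control the fluctuations \emph{within} blocks, thereby upgrading skeleton convergence to full convergence. The two hypotheses are tailored exactly to these two steps: hypothesis (2) drives the skeleton convergence, while hypothesis (1) is precisely the local submartingale-type bound needed to invoke Theorem B on each block.

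First I would handle the skeleton. Since $\sum_n EX_{nT} < \infty$ and each $X_{nT} \ge 0$, Markov's inequality gives $P(X_{nT} \ge \varepsilon) \le \varepsilon^{-1} EX_{nT}$, so $\sum_n P(X_{nT} \ge \varepsilon) < \infty$ for every $\varepsilon > 0$; by Borel--Cantelli, $X_{nT} \to 0$ $P$-a.s. as $n \to \infty$ (equivalently, $\sum_n X_{nT} < \infty$ a.s. by monotone convergence, which is even stronger). Next, for each fixed $n$, apply Theorem B to the right-continuous process $(X_{nT+s}, \mathcal{F}^{(n)}_s, P)_{s \in [0,T]}$ with ``last variable'' $X_{(n+1)T}$: hypothesis (1) says exactly that $E(X_{(n+1)T} \mid \mathcal{F}^{(n)}_s) \ge a X_{nT+s}$ for $0 \le s < T$, and $X_{(n+1)T} \in L^1(P)$ is nonnegative, so Theorem B yields, for every $\alpha > 0$,
$$P\left(\sup_{0 \le s \le T} X_{nT+s} \ge \alpha\right) \le \frac{1}{\alpha \widetilde a}\, E X_{(n+1)T} = \frac{1}{\alpha a}\, E X_{(n+1)T},$$
using $a \le 1$ so that $\widetilde a = a$.

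Now fix $\varepsilon > 0$. Summing the block estimate over $n$,
$$\sum_{n \ge 1} P\left(\sup_{0 \le s \le T} X_{nT+s} \ge \varepsilon\right) \le \frac{1}{\varepsilon a} \sum_{n \ge 1} E X_{(n+1)T} < \infty$$
by hypothesis (2). Borel--Cantelli then gives that, $P$-a.s., only finitely many blocks $[nT,(n+1)T]$ have $\sup_{0 \le s \le T} X_{nT+s} \ge \varepsilon$; hence $\limsup_{t \to \infty} X_t \le \varepsilon$ a.s. Intersecting over a sequence $\varepsilon \downarrow 0$ (a countable intersection of full-measure events) yields $\limsup_{t \to \infty} X_t = 0$ a.s., and since $X_t \ge 0$, this is $\lim_{t \to \infty} X_t = 0$ a.s.

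The main obstacle — really the only non-routine point — is making sure Theorem B applies cleanly to the shifted block process: one must check that $(X_{nT+s})_{s \in [0,T]}$ is right-continuous (inherited from right-continuity of $X$, which should be part of the standing assumptions or added; note the filtration $\mathcal{F}^{(n)}_s$ is the one generated by the block, which is what the shifted statement of Theorem B needs), that the $L^1$ hypothesis on the block's last variable $X_{(n+1)T}$ holds (given, since $EX_t < \infty$ for all $t$), and that hypothesis (1) is invoked at the correct index range $0 \le s < t < T$ with $t$ playing the role of the endpoint after a further limiting argument inside Theorem B — but that limiting argument ($\alpha^* \uparrow \alpha$, dyadic refinement) is exactly what the proof of Theorem B already carries out, so no new work is needed. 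Everything else is Markov's inequality and two applications of Borel--Cantelli.
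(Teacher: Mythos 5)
Your proof is correct and follows essentially the same route as the paper: apply Theorem B to each block $(X_{nT+s})_{s\in[0,T]}$ to bound $P\bigl(\sup_{0\le s\le T}X_{nT+s}\ge\varepsilon\bigr)$ by $(a\varepsilon)^{-1}EX_{(n+1)T}$, sum using hypothesis (2), and conclude by Borel--Cantelli. Your preliminary skeleton step ($X_{nT}\to 0$ via Markov) is superfluous since the block suprema already control it, and your remarks on right-continuity and on the endpoint $t=T$ in hypothesis (1) correctly flag small imprecisions in the theorem's statement rather than gaps in your argument.
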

\begin{proof}
By the Borel-Cantelli Lemma, it is enough to show that for any given $\epsilon >0$,
$$\sum_{n\ge 0} P\left(\sup_{s\in [0,T]} X_{nT+s}>\epsilon\right)<\infty.$$
By our first assumption along with Theorem B (applied to $Y^{(n)}_t:=X_{nT+t}$ on $[0,T]$), the lefthand side is bounded by $(a\epsilon)^{-1} \sum_{i\ge i_{0}}EX_{(n+1)T}$, and we are done, given  our second assumption.
 \end{proof}
\subsection{Examples of applications of Theorem \ref{a.s.conv}} We present a few applications of Theorem \ref{a.s.conv} below.We start with some notation.
\begin{notation}  In the sequel, $\mathcal{M}(\mathbb R^d)$ denotes the space of finite measures on $\R^d$;
for $i\ge 1$ and $\eta\in (0, 1]$,
 $C^{i,\eta}(\R^d)$ denotes the space of $i$ times
continuously differentiable functions with all their $i$-th order
derivatives belonging to $C^{\eta}(\R^d)$. (Here $C^{\eta}(\R^d)$
denotes the usual H\"older space.) 
\end{notation}

\subsubsection{Subcritical branching and GBM} Consider the subcritical branching process $Z$ in Theorem \ref{subcr.br} and the geometric Brownian motion in Theorem \ref{GBMthm} with $\mu<0$.  For these processes, the  summability of the expectations at integer times is obvious. Hence,  both  tend to zero as $t\to\infty$, almost surely.
 
\subsubsection{Total mass of superprocesses} A more involved case is  the proof of the fact that the `over-scaled' total mass of a superprocess tends to zero. Below we give some background on the model for the non-expert reader (for the result and its proof and for more background, see \cite{ESR}).

Consider $Y=\{Y_t;\,t\geq 0\}$, the diffusion process with
probabilities $\{{P}{_{x}},\ x\in \R^d\}$ and expectations $\{{E}{_{x}},\ x\in \R^d\}$ corresponding to $L$ on $\R^d$, where
$$
L:=\frac{1}{2}\nabla\cdot
a\nabla+b\cdot\nabla\quad \mbox{ on }\R^d,
$$
and $a, b$ satisfy the following
\begin{description}
\item{(1)} the symmetric matrix $a=\{a_{i,j}\}$  satisfies
$$
A_1|v|^2\le\sum^d_{i,j=1}a_{i,j}(x)v_iv_j\le A_2|v|^2,\quad\mbox{ for
all }v\in\R^d\mbox{ and }x\in \R^d
$$
with some  $A_1, A_2>0$, and
 $a_{i,j}\in C^{1,\eta}, i,j=1,\cdots,d,$ for
some $\eta$ in $(0,1]$;
\item{(2)} the coefficients $b_i$, $i=1, \cdots, d$, are measurable functions satisfying
$$
\sum^d_{i=1}|b_i(x)|\le C(1+|x|), \qquad \mbox{ for
all } x\in \R^d
$$
with some $C>0$;
\item{(3)} there
exists a differentiable function $Q:\R^d\to \R$ such that $b=a\nabla
Q$.
\end{description}
An {\it $(L,\beta, k)$-superprocess}  is an $\mathcal{M}(\mathbb R^d)$-valued Markov process
$(\{X_t\}_{t\ge 0};\P_{\mu},\,\mu\in \mathcal{M}(\mathbb R^d))$ such that
$\P_{\mu}(X_0=\mu)=1$, and satisfying that  for any  bounded  Borel  $f\ge 0$ on $\R^d$,  
\begin{equation}\label{inhom-fund}
\P_{ \mu}\exp\langle -f,
X_{t}\rangle=\exp\langle -u(t, \cdot),\mu\rangle,
\end{equation} where scalar products denote integration, and with some sufficiently nice (see \cite{ESR}) functions $k\ge 0$ and $\beta$, the function $u$ is the minimal nonnegative solution to
\begin{equation}\label{inhom-int}
u(t, x)+E_{x}\int^{t}_0k(\xi_s)(u(t-s, \xi_s))^2\mathrm{d}s-E_{
x}\int^{t}_0\beta(\xi_s)u(t-s, \xi_s)\mathrm{d}s=E_{x}f(\xi_{t}).
\end{equation}
In particular, $\|X_t\|=\langle 1,X_t\rangle$ is the total mass of the superprocess. In \cite{ESR} it has been proven that if $\lambda\in\R$ is sufficiently large, then $\P_{\mu}(\lim_{t\to\infty}e^{-\lambda t}\|X_t\|=0)=1$ for $\mu\in \mathcal{M}(\mathbb R^d)$.
Although Doob's inequality is not applicable in this situation,  Theorem \ref{a.s.conv} works. (For the details the reader should consult \cite{ESR}, but in fact the proof is similar to that of Theorem \ref{over.sc} below.)

\subsubsection{Total population in a branching diffusion} Let $D\subseteq \R^d$ be a non-empty domain and $$L:=\frac{1}{2}\nabla \cdot  a\nabla+ b\cdot\nabla\ \text{on}\ D,$$
where the functions $  a_{i,j},  b_i:D\to \R,\ i,j=1,...,d$, belong to $C^{1,\eta}(D),\ \eta\in(0,1]$, and the symmetric matrix $( a_{i,j}(x))_{1\le i,j\le d}$ is positive definite for all $x\in D$.  Consider $Y=\{Y_t;\,t\geq 0\}$, the diffusion process with
probabilities $\{{P}{_{x}},\ x\in D\}$ and expectations $\{{E}{_{x}},\ x\in D\}$ corresponding to $L$ on $D$. We do not assume that $Y$ is conservative, that is, for $\tau_D:=\inf\{t\ge 0\mid Y_t\not\in D\}$, the exit time from $D$, $\tau_D<\infty$ may hold with positive probability. Intuitively, this means that $Y$ may get killed at the Euclidean boundary of $D$ or `run out to infinity' in finite time.

Let us first assume that 
\begin{equation}\label{nagyon.szep.beta}
0\leq \beta \in C^{\eta }(D),\ \sup_D \beta<\infty,\ \beta \not\equiv 0.
\end{equation} 
The (strictly dyadic) \emph{$(L,\beta ;D)$-branching diffusion} is the
Markov process $Z$ with motion component $Y$ and with spatially dependent rate  $\beta $, replacing particles by precisely two offspring when branching and starting from a single individual. Informally, starting  with an initial particle at $x\in D$, it performs a diffusion corresponding to $L$ (with killing at $\partial D$) and the probability that it does not branch until $t>0$ given its path $\{Y_s;0\le s\le t\}$ is $\exp(-\int_0^t \beta (Y_s)\,\mathrm{d}s)$. When it does branch, it dies and produces two offspring, each of which follow the same rule, independently of each other and of the parent particle's past, etc. (Already at the instant of the branching we  have two offspring particles at the same location, i.e. at the location of the death of their parent.) Write $\mathbb P_x$ (instead of the more correct $\mathbb P_{\delta_{x}}$) for  the probability when $Z$ starts with a single particle at $x\in D$

Then $Z$ can be considered living either on the space of `point configurations,' that is, sets which consist of finitely many (not necessarily different) points in $D$; or $\mathcal{M}(D),$ the space of finite discrete measures on $D$.
We will write  $\langle f,Z_t \rangle:=\sum_1^{N_{t}}f({Z_{t}^i})$, where $N_t=\|Z_t\|$ is the number of points (with multiplicity) in $D$  at time $t$.

Now relax the assumption that  $\sup_D \beta<\infty$ and replace it with the less stringent one that $\beta$ is in the Kato-class $\mathbf{K}(Y)$, meaning that $$\lim_{t\downarrow0}\sup_{x\in D}P_x\left(\int^t_0|\beta(Y_s)|\,\mathrm{d}s\right)=0.$$
Define
$$
\lambda_\infty(\beta) :=\lim_{t\to\infty}\frac{1}{t}\log\|S^\beta_t\|_{\infty},
$$
where $S^\beta$ is the semigroup corresponding to the operator $L+\beta$ on $D$.
We call $\lambda_{\infty}=\lambda_{\infty}(\beta)$ the {\it $L^{\infty}$-growth bound}.
The Kato-class assumption implies, in fact, that the semigroup is well defined and that $\lambda_\infty(\beta)<\infty$. (See \cite{ESR}). 

By standard theory then (see 1.14 in \cite{Ebook}), the {\it generalized principal eigenvalue} of $L+\beta$ on $D$,  $\lambda_c(\beta):=\inf\{\lambda\in\mathbb R\mid \exists u>0\ \text{s.t}\ (L+\beta-\lambda)u=0\ \text{in}\ D \}$ satisfies that $\lambda_c(\beta)\le \lambda_\infty(\beta)$ and thus $\lambda_c(\beta)<\infty$. 

Whenever  $\lambda_c(\beta)<\infty$, $Z$ is well defined as a locally finite (discrete) measured-valued process even if $\beta$ is not bounded from above \cite{Ebook}.
But since we even assume that $\lambda_\infty(\beta)<\infty$, we know that the process is almost surely finite measure valued, not just locally, but globally. This is because of the well known fact (called `Many-to-one formula'; see e.g. \cite{Ebook}) that
$\mathbb E_{\mu} \|Z_t\|=\langle P_t^\beta 1,\mu\rangle$ for $t\ge 0$, which implies that even the expectation of the total mass is finite. For the growth of the total mass, we now derive a bound using Theorem \ref{a.s.conv}.

\begin{theorem}[Over-scaling]\label{over.sc} Let $\mu$ be a nonempty finite discrete point measure. If $\lambda>\lambda_\infty$ then
$$\lim_{t\to\infty}e^{-\lambda t}\|Z_t\| =0,\ \mathbb P_\mu\text{-a.s.}$$

\end{theorem}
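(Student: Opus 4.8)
The plan is to apply Theorem~\ref{a.s.conv} to the nonnegative process $X_t := e^{-\lambda t}\|Z_t\|$, so the task reduces to verifying the two hypotheses of that theorem for a sufficiently small $T>0$. First I would record the two basic facts about the branching diffusion that make everything work: the Many-to-one formula $\mathbb E_{\mu}\|Z_t\| = \langle P_t^\beta 1,\mu\rangle$, and the branching (Markov) property, which gives $\mathbb E_{\mu}(\|Z_{t+s}\| \mid \mathcal{G}_s) = \langle P_t^\beta 1, Z_s\rangle$ for $s,t\ge 0$, where $\mathcal{G}_s$ is the natural filtration of $Z$. Combining the $L^\infty$-growth bound with the elementary estimate $\|S_t^\beta f\|_\infty \le \|S_t^\beta 1\|_\infty \|f\|_\infty$ (positivity of the semigroup), for any $\lambda'$ with $\lambda_\infty < \lambda' $ there is a constant $C=C(\lambda')$ such that $\|P_t^\beta 1\|_\infty \le C e^{\lambda' t}$ for all $t\ge 0$; I would fix some $\lambda'\in(\lambda_\infty,\lambda)$ once and for all.

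For hypothesis (1), I would estimate, for $0\le s<t<T$,
\[
\mathbb E\big(X_{nT+t}\mid \mathcal{F}^{(n)}_s\big) = e^{-\lambda(nT+t)}\,\langle P_{t-s}^\beta 1, Z_{nT+s}\rangle \ge e^{-\lambda(nT+t)}\,\inf_{x\in D} (P_{t-s}^\beta 1)(x)\cdot \|Z_{nT+s}\| .
\]
Since $P_r^\beta 1 \ge e^{-\|\beta^-\|\,r}\,P_r 1$ trivially fails without an $L^\infty$ bound on $\beta^-$, I would instead use that under the Kato-class hypothesis one has a lower bound of the form $\inf_{x\in D}(P_r^\beta 1)(x) \ge c_0 > 0$ for all $r\in[0,T]$ once $T$ is small (this is where smallness of $T$ enters, and it follows from the Kato-class condition which forces $\sup_x P_x(\int_0^t|\beta(Y_s)|ds)\to 0$ as $t\downarrow 0$, hence $P_r^\beta 1$ stays bounded below near $1$ — note however that $Y$ may be killed at $\partial D$, so in general only a bound $\inf P_r^\beta 1 \ge c_0 e^{-C r}$ for small $r$ can be expected, which still suffices). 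Then $\mathbb E(X_{nT+t}\mid \mathcal{F}^{(n)}_s) \ge e^{-\lambda(t-s)} c_0 \cdot X_{nT+s} \ge e^{-\lambda T} c_0 \cdot X_{nT+s}$, so hypothesis (1) holds with $a := e^{-\lambda T} c_0 \wedge 1 \in (0,1]$. For hypothesis (2), the Many-to-one formula together with the semigroup bound gives $\mathbb E X_{nT} = e^{-\lambda n T}\langle P_{nT}^\beta 1,\mu\rangle \le e^{-\lambda nT}\,C e^{\lambda' nT}\,\|\mu\| = C\|\mu\|\, e^{-(\lambda-\lambda')nT}$, and since $\lambda-\lambda'>0$ this is a convergent geometric series; hence $\sum_n \mathbb E X_{nT}<\infty$. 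Also $\mathbb E X_t<\infty$ for every $t$, again by Many-to-one. Theorem~\ref{a.s.conv} then yields $X_t = e^{-\lambda t}\|Z_t\|\to 0$ $\mathbb P_\mu$-a.s., which is the claim.

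The main obstacle is the verification of the uniform-in-$x$ lower bound $\inf_{x\in D}(P_r^\beta 1)(x)>0$ (or $\ge c_0 e^{-Cr}$) for small $r$: unlike in the bounded-$\beta$, conservative case where $P_r 1 \equiv 1$, here $Y$ is allowed to be non-conservative and $\beta$ is merely Kato-class, so one must genuinely use the Kato-class estimate to control both the possible mass loss through $\partial D$ and the effect of $\beta$ over a short time window. I would handle this by writing $P_r^\beta 1 = P_r 1 + $ (correction term bounded via a Duhamel/Feynman–Kac expansion using $\int_0^r \beta(Y_s)\,ds$), and invoking the Kato-class limit to make the correction uniformly small and $P_r 1$ uniformly close to $1$... except the latter can fail near $\partial D$, so in fact the cleanest route is to absorb everything into the single observation that, by the Markov and branching properties, $\mathbb E_\mu(\|Z_{t+s}\|\mid\mathcal{G}_s) = \langle P_t^\beta 1, Z_s\rangle \ge 0$ automatically, and to note that hypothesis (1) of Theorem~\ref{a.s.conv} in the present exponentially-scaled setting only requires $a>0$, which is guaranteed as soon as $\inf_{r\in[0,T]}\inf_{x\in D} P_r^\beta 1(x)>0$ — a standard consequence of the Kato-class assumption for $T$ small, established in \cite{ESR}. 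Everything else is routine.
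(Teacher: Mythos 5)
Your overall strategy coincides with the paper's: apply Theorem~\ref{a.s.conv} to $X_t:=e^{-\lambda t}\|Z_t\|$, obtain the summability hypothesis from $\lambda>\lambda_\infty$ via the many-to-one formula (your version, fixing $\lambda'\in(\lambda_\infty,\lambda)$ and summing a geometric series, is fine and if anything more explicit than the paper's $\exp(\lambda_\infty nT+o(n))$ statement), and reduce hypothesis (1) to a short-time lower bound, uniform in the particle position $x$, for the expected scaled mass started from a single particle. The problem is that you never actually prove that lower bound, and it is the only non-routine step — as you yourself say, it is ``the main obstacle.'' You reduce (1) to $\inf_{0\le r\le T}\inf_{x\in D}P^\beta_r1(x)\ge c_0>0$ for small $T$, correctly observe that this is delicate because $Y$ may be non-conservative (so $P_r1$ need not be close to $1$ near $\partial D$), start a Duhamel-type expansion, abandon it, and finally assert that the bound is ``a standard consequence of the Kato-class assumption, established in \cite{ESR}.'' That is a genuine gap: no specific statement is quoted, and the Kato-class condition by itself only controls $\sup_x P_x\int_0^t|\beta|(Y_s)\,\mathrm{d}s$ for small $t$, i.e.\ it gives smallness of the potential term; it says nothing, on its own, about the possible loss of mass of the motion at $\partial D$ (or at infinity) that you yourself raised and then waved away. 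So the decisive inequality is left unproven.

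The paper closes exactly this step with a short concrete argument rather than a citation: by the many-to-one (Feynman--Kac) representation, $e^{-\lambda r}\mathbb E_{\delta_x}\|Z_r\|=e^{-\lambda r}P_x\exp\left(\int_0^r\beta(Y_s)\,\mathrm{d}s\right)$, and Jensen's inequality bounds this from below by $\exp\left(-\lambda r+P_x\int_0^r\beta(Y_s)\,\mathrm{d}s\right)$; the Kato-class condition then allows one to fix an arbitrary $a\in(0,1)$ and choose $T>0$ so small that $-\lambda r+P_x\int_0^r\beta(Y_s)\,\mathrm{d}s\ge\log a$ for all $0<r<T$ and all $x$, uniformly. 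Inserting this bound into $\langle e^{-\lambda(t-s)}\mathbb E_x\|Z_{t-s}\|,Z_{nT+s}\rangle$ gives hypothesis (1) with that $a$, which is precisely the estimate you needed. In other words, the paper works directly with the exponential of the integrated branching rate via Jensen, instead of trying to lower-bound $P^\beta_r1$ through $P_r1$ plus a correction term. Your reduction and your treatment of hypothesis (2) match the paper, but as written your argument is incomplete at its one essential point; you should either reproduce this Jensen/Kato-class argument or supply an equally explicit proof of the uniform short-time lower bound.
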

\begin{proof}
For $n\ge 0$ and $s>0$, let   $$\mathcal{F}^{(n)}_s:=\sigma(X_{nT+r}: r\in [0, s]).$$ By Theorem \ref{a.s.conv} (applied to the process $X$ with $X_t:=e^{-\lambda t}\|Z_t\|$)  it is enough to verify these two statements:
\begin{enumerate}
\item For some $a\in(0,1)$ and $T>0$, $$\mathbb E_\mu (e^{-\lambda(nT+t)}\|Z_{nT+t}\|\mid \mathcal{F}^{(n)}_s)\ge a e^{-\lambda(nT+s)}\|Z_{nT+s}\|$$ holds for $n\ge 1$ and $0<t\le T$;
\item $\sum_n \mathbb E_\mu e^{-\lambda nT}\|Z_{nT}\|<\infty.$ 
\end{enumerate}
The second statement simply follows from the facts that $\lambda>\lambda_\infty$, while $$\mathbb E_\mu\|Z_{nT+t}\|=\exp(\lambda_{\infty}nT+o(n))$$ as $n\to\infty$.

The first statement is a consequence of the Kato-class assumption. Indeed, using the Markov and the branching properties,
\begin{equation}\label{Mbr}
\mathbb E_\mu (e^{-\lambda(nT+t)}\|Z_{nT+t}\|\mid \mathcal{F}^{(n)}_s)=
 e^{-\lambda(nT+t)}\mathbb E_{Z_{nT+s}}\|Z_{t-s}\|= \langle e^{-\lambda(nT+t)}\mathbb E_{x}\|Z_{t-s}\|,Z_{nT+s}\rangle .
 \end{equation}
Fix an arbitrary $a\in (0,1)$; we are now going to determine $T$ that works for this given $a$.
Since  $\beta\in\mathbf{K}(Y)$, i.e.  $$\lim_{t\downarrow 0}
\sup_{x\in \R^d}P_x\int_0^t |\beta| (Y_s)\, \mathrm{d}s=0,$$ we are able to pick a $T>0$ such that
$$
-\lambda t +P_x \int_0^t \beta (Y_s)\, \mathrm{d}s\ge \log a,
$$
for all $0<t< T$ and all $x\in \R^d$.
By Jensen's inequality,
$$
-\lambda t+\log P_x \exp\left(\int_0^t \beta (Y_s)\, \mathrm{d}s\right)\ge \log a,
$$
and thus
$$
\mathbb E_{\delta_{x}}e^{-\lambda t}\|X_t\|=e^{-\lambda t}P_x \exp\left(\int_0^t
\beta (Y_s)\, \mathrm{d}s\right)\ge a
$$
holds too, for all $0<t< T$ and all $x\in \R^d$.
Therefore we can continue \eqref{Mbr} with
$$\ge a e^{\lambda(t-s)}e^{-\lambda(nT+t))}\|Z_{nT+s}\|=a e^{-\lambda(nT+s)}\|Z_{nT+s}\|,$$
and we are done.
\end{proof}



\section{SMG's and $a$-achieving processes}
So far we have explored some consequences of Theorems A and B.  In Theorem A we only compared $X_N$  to all $X_n,n<N$ with $N$ fixed. If we compare {\it all} the pairs of the random variables, then we can define a new  class of stochastic processes which we dub `$a$-achieving processes.'
\begin{definition}[$a$-achieving process]
Let $a>0$.
\begin{itemize}
\item[(a)] Let $M\in\mathbb N\cup \{+\infty\}$. 
We call an integrable stochastic process $X=\{X_n\}_{n\in \mathbb N,  n\le M}$ \emph{$a$-achieving} if 
\begin{equation}\label{achieve.disc}
E(X_{n+1}\mid \mathcal{F}_n)\ge a X_n
\end{equation}
holds for every   $n\in\mathbb N$ satisfying $n\le M$; we call it \emph{uniformly $a$-achieving} if
\begin{equation}\label{unif.achieve.disc}
E(X_{n}\mid \mathcal{F}_m)\ge a X_m
\end{equation}
holds for all $m,n\in\mathbb N$ such that $0\le m<n\le M$.

\item[(b)] Let $S\in\mathbb R_{+}\cup \{+\infty\}$. 
We call a right-continuous integrable stochastic process $X=\{X_t\}_{t\in\mathbb R_{+},t\le S}$ \emph{$a$-achieving} if 
\begin{equation}\label{achieve.cont}
E(X_t\mid \mathcal{F}_s)\ge a^{t-s} X_s
\end{equation}  holds for every pair  $s,t\in \mathbb R_{+}$ satisfying $s<t\le S$;  we call it \emph{uniformly $a$-achieving} if
\begin{equation}\label{unif.achieve.cont}
E(X_{t}\mid \mathcal{F}_s)\ge a X_s
\end{equation}
holds for every pair  $s,t\in \mathbb R_{+}$ satisfying $s<t\le S$.
\end{itemize}
\end{definition}
\begin{example} The subcritical branching process $Z$ in Theorem \ref{subcr.br} is $e^{bm}$-achieving, while the geometric Brownian motion in Theorem \ref{GBMthm} is $e^{\mu}$-achieving.
\end{example}
An equivalent definition is as follows.
 \begin{lemma}\label{backtrafo}
 Let $a>0$.
 \begin{itemize}
 \item[(a)]	 (Discrete) $X$ is $a$-achieving if and only if $Y$ defined by $Y_n:=a^{-n}X_n$ is a submartingale.  
 
\item[ (b)] (Continuous) $X$ is $a$-achieving if and only if $Y$ defined by $Y_t:=a^{-t}X_t$ is a (right-continuous) submartingale.
\end{itemize}
 \end{lemma}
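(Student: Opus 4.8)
The plan is to verify the equivalence directly from the definitions in both the discrete and continuous settings, using only the defining inequalities for $a$-achieving processes and the standard definition of a submartingale, together with the (measurability-preserving) fact that multiplying by the deterministic constant $a^{-n}$ (resp.\ $a^{-t}$) does not affect adaptedness or integrability since $a>0$.

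\medskip

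\emph{Discrete case (a).} First I would record that $Y_n := a^{-n}X_n$ is adapted to $(\mathcal{F}_n)$ and integrable precisely when $X_n$ is, because $a^{-n}$ is a positive deterministic scalar. Then the submartingale condition $E(Y_{n+1}\mid \mathcal{F}_n)\ge Y_n$ reads $a^{-(n+1)}E(X_{n+1}\mid\mathcal{F}_n)\ge a^{-n}X_n$, and multiplying both sides by the positive quantity $a^{n+1}$ yields exactly $E(X_{n+1}\mid\mathcal{F}_n)\ge a X_n$, which is \eqref{achieve.disc}. Since every step is reversible (we only ever multiplied by strictly positive deterministic constants), this gives the ``if and only if'' at the level of the one-step inequality, and that is precisely the defining inequality of $a$-achieving in the discrete case, so part (a) follows.

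\medskip

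\emph{Continuous case (b).} The argument is the same in spirit but now uses the exponent $t-s$ built into \eqref{achieve.cont}. I would note again that $Y_t := a^{-t}X_t$ is right-continuous (product of the continuous deterministic map $t\mapsto a^{-t}$ with the right-continuous $X$), adapted, and integrable iff $X$ is. The submartingale inequality $E(Y_t\mid\mathcal{F}_s)\ge Y_s$ for $s<t$ becomes $a^{-t}E(X_t\mid\mathcal{F}_s)\ge a^{-s}X_s$, i.e.\ $E(X_t\mid\mathcal{F}_s)\ge a^{t-s}X_s$, which is exactly \eqref{achieve.cont}; multiplying by $a^{t}>0$ makes the equivalence transparent in both directions. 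This establishes part (b).

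\medskip

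I do not expect any serious obstacle here: the statement is essentially a change-of-variables bookkeeping, and the only points needing a word of care are (i) that the normalization $a^{-n}$ (resp.\ $a^{-t}$) is a \emph{deterministic} positive constant, so it commutes with conditional expectation and preserves adaptedness and integrability, and (ii) that right-continuity of $Y$ in the continuous case follows from right-continuity of $X$ and continuity of $t\mapsto a^{-t}$ — this is why the process $Y$ inherits the regularity required to call it a (right-continuous) submartingale. The mildly delicate design choice, worth a sentence, is that the continuous notion of $a$-achieving was deliberately defined with the exponent $t-s$ precisely so that this clean correspondence with genuine submartingales holds; the ``uniformly $a$-achieving'' variant, by contrast, corresponds under the same substitution not to a submartingale but to the weaker comparison appearing in Theorem B.
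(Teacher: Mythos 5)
Your proposal is correct and takes essentially the same approach as the paper: in both cases one simply multiplies or divides by the deterministic positive factor $a^{-n}$ (resp.\ $a^{-t}$) and compares the defining inequalities, with the continuous case being a direct substitution. The only cosmetic difference is that in the discrete case the paper explicitly establishes the multi-step inequality $E(X_n\mid\mathcal{F}_m)\ge a^{n-m}X_m$ by induction and then checks $E(Y_n\mid\mathcal{F}_m)\ge Y_m$ for all $m<n$, whereas you rely on the standard fact that the one-step inequality (via the tower property) already characterizes the submartingale property, which is perfectly acceptable.
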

 \begin{proof} (a) First assume that $X$ is $a$-achieving. It is easy to prove then by induction that
 $E(X_n\mid \mathcal{F}_m)\ge a^{n-m}X_m,\ n\ge m.$
 Hence, $E(Y_n\mid \mathcal{F}_m)\ge a^{-n}a^{n-m}X_m=a^{-m}X_m=Y_m.$

Conversely, if $Y$ is a submartingale then
$a^{-m-1}E(X_{m+1}\mid \mathcal{F}_m)=E(Y_{m+1}\mid \mathcal{F}_m)\ge Y_m=a^{-m}X_m,$
so $E(X_{m+1}\mid \mathcal{F}_m)\ge aX_m.$
 
\medskip
\noindent (b)  Let first $X$ be $a$-achieving. Using the  definition,   $E(Y_t\mid \mathcal{F}_s)\ge a^{-t}a^{t-s}X_s=a^{-s}X_s=Y_s,$ for all  $s,t\in \mathbb R_{+},\ s<t\le S.$
Conversely, if $Y$ is a submartingale then
$a^{-t}E(X_{t}\mid \mathcal{F}_s)=E(Y_{t}\mid \mathcal{F}_s)\ge Y_s=a^{-s}X_s,$
yielding  $E(X_{t}\mid \mathcal{F}_s)\ge a^{t-s}X_s.$
 \end{proof}

A convenient property of submartingales is that their class is closed under transformations with non-decreasing and convex functions. We are now generalizing this property.
 In order to accomplish this, we are going to work with functions which are {\it approximately convex}.
Concerning this notion, we  briefly explain the basic facts below; the interested reader may check e.g. \cite{HU,P} and the references therein for more elaboration.
\begin{definition}[Approximate convexity] Let $I\subseteq \mathbb R$ be a (bounded or unbounded) interval and $\delta\ge 0$. The function 
$f: I\to\mathbb R$ is called \emph {$\delta$-convex} if
$$f(tx+(1-t)y)\le tf(x)+(1-t)f(y)+\delta$$
holds for $x,y\in I$ and $t\in [0,1]$. (True convexity in particular means that $\delta=0$ can be taken.)
\end{definition}
  \begin{theorem}[From SMG to uniformly $a$-achieving]\label{trafo}
Let $I$ be a (bounded or unbounded) interval and $X$ an $I$-valued submartingale. Assume that $f:I\to\mathbb R$ is a non-decreasing $\delta$-convex function with $\delta\ge 0$, and in the continuous setting assume also that $f$ is continuous. Then the process $Y:=e^{f(X)}$ is uniformly $e^{-\delta}$-achieving.
\end{theorem}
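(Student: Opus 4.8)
The plan is to reduce the claim, via Lemma~\ref{backtrafo}, to showing that $e^{\delta}Y = e^{\delta} e^{f(X)}$ is a submartingale, i.e.\ that $e^{f(X_s)} \le e^{\delta}\,E\!\left(e^{f(X_t)} \mid \mathcal F_s\right)$ for $s<t$; equivalently, since the claimed conclusion $E(Y_t\mid\mathcal F_s)\ge e^{-\delta}Y_s$ is literally \eqref{unif.achieve.cont} with $a=e^{-\delta}$ (and \eqref{unif.achieve.disc} in the discrete case), it suffices to verify that inequality directly. First I would record the standard fact that $f$, being $\delta$-convex on the interval $I$, is bounded below by an ``approximate tangent line'': for each point $x_0$ in the interior of $I$ there is a slope $c = c(x_0)$ with $f(x) \ge f(x_0) + c\,(x - x_0) - \delta$ for all $x \in I$. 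This is the $\delta$-convex analogue of the supporting-line property and follows from the usual three-slopes/monotone-difference-quotient argument applied to $g(x):=f(x)$ perturbed by the defining inequality; the constant $\delta$ simply propagates additively. Because $f$ is also non-decreasing, one may moreover take $c \ge 0$ (replace a negative slope by $0$, which only weakens the bound while keeping it valid since $f$ is non-decreasing).

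Next, fix $s<t$ and apply this at the (random) point $x_0 = X_s$: on the event where $X_s$ lies in the interior of $I$ we get, with $c = c(X_s) \ge 0$ which is $\mathcal F_s$-measurable,
\[
f(X_t) \ge f(X_s) + c\,(X_t - X_s) - \delta \qquad \text{a.s.}
\]
Exponentiating (monotonicity of $\exp$) and then conditioning on $\mathcal F_s$,
\[
E\!\left(e^{f(X_t)} \mid \mathcal F_s\right) \ge e^{f(X_s) - \delta}\, E\!\left(e^{c(X_t - X_s)} \mid \mathcal F_s\right) \ge e^{f(X_s) - \delta}\, e^{\,c\,E(X_t - X_s \mid \mathcal F_s)} \ge e^{f(X_s) - \delta},
\]
where the middle step is the conditional Jensen inequality applied to the convex function $u\mapsto e^{cu}$, and the last step uses $c\ge 0$ together with the submartingale property $E(X_t - X_s\mid\mathcal F_s)\ge 0$. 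This is exactly $E(Y_t\mid\mathcal F_s)\ge e^{-\delta}Y_s$, i.e.\ $Y$ is uniformly $e^{-\delta}$-achieving. The discrete case is the special case $t = n+1$, $s=n$ (or any $m<n$); the continuous case needs right-continuity of $Y$, which holds because $f$ is continuous and $X$ is right-continuous.

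The main obstacle is the boundary behaviour: when $X_s$ sits at an endpoint of $I$ (or when $I$ is a half-line and $X_s$ can be arbitrarily large/small), the supporting-slope statement needs care — one should either note that the endpoint case can be handled by a one-sided tangent, or exhaust $I$ by compact subintervals and pass to the limit, invoking integrability of $X$ and of $Y=e^{f(X)}$ (the latter being part of the hypothesis that $Y$ is a genuine, integrable process to which Lemma~\ref{backtrafo} applies). A secondary technical point is measurability of $x_0\mapsto c(x_0)$: one fixes a \emph{deterministic} Borel choice of approximate-tangent slope $c(\cdot)$ once and for all (e.g.\ the right-hand lower Dini slope of $f$), so that $c(X_s)$ is automatically $\mathcal F_s$-measurable. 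Everything else is the routine Jensen-plus-submartingale computation sketched above.
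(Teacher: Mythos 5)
Your argument is correct in its core and takes a genuinely different route from the paper. The paper does not use supporting lines at all: it invokes the Hyers--Ulam decomposition $f=g+h$ ($g$ convex, $\sup_I|h|\le\delta/2$) to get the approximate Jensen inequality $E f(X)\ge f(EX)-\delta$ (Lemma \ref{apprJ}), then writes $E(e^{f(X_n)}\mid\mathcal F_m)\ge e^{E(f(X_n)\mid\mathcal F_m)}\ge e^{-\delta}e^{f(E(X_n\mid\mathcal F_m))}\ge e^{-\delta}e^{f(X_m)}$, the last step by the submartingale property and monotonicity of $f$. You instead prove an approximate subgradient inequality $f(x)\ge f(x_0)+c(x_0)(x-x_0)-\delta$ at the random point $x_0=X_s$, exponentiate, and use Jensen for $u\mapsto e^{cu}$ together with $c\ge 0$ and $E(X_t-X_s\mid\mathcal F_s)\ge 0$. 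This is a legitimate alternative: the three-chord computation does give the approximate supporting line with error exactly $\delta$ (so you recover the same constant $e^{-\delta}$), it avoids citing Hyers--Ulam, and your observation that one may take $c\ge 0$ is correct. The price is the measurable-selection point (which you handle) and extra care with conditional Jensen for a random, $\mathcal F_s$-measurable slope; the paper's route sidesteps both by applying Jensen at the level of the function $f$ and the conditional expectation $E(X_n\mid\mathcal F_m)$, so it works uniformly on all of $I$ with no case distinctions.

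The one genuine soft spot is the boundary case you flag, and your proposed remedies do not quite close it. At the \emph{right} endpoint $b=\sup I\in I$ the approximate supporting line can fail outright (e.g.\ $f$ convex, non-decreasing with a jump at $b$, or $f(x)=-\sqrt{1-x}$ on $[0,1]$ when $\delta=0$): no finite slope $c$ works, and neither a ``one-sided tangent'' nor exhaustion by compact subintervals produces one. The correct patch is probabilistic rather than analytic: on the event $A=\{X_s=b\}\in\mathcal F_s$, the submartingale property forces $E(X_t\mid\mathcal F_s)\ge b$ while $X_t\le b$, hence $X_t=b$ a.s.\ on $A$, so $E(e^{f(X_t)}\mid\mathcal F_s)=e^{f(X_s)}\ge e^{-\delta}e^{f(X_s)}$ there trivially; off $A$ (and at a left endpoint, where $c=0$ works by monotonicity) your argument applies as written. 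With that replacement, and the mild integrability bookkeeping in the random-slope Jensen step, your proof is complete.
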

In order to prove Theorem \ref{trafo} we need two lemmas.
\begin{lemma}[Hyers-Ulam]
$f: I\to\mathbb{R}$ is $\delta$-convex if and only if
it decomposes as $f=g+h$, where $g$ is a convex function on $I$ and $\sup_{x\in I} |h(x)|\le \delta/2.$
\end{lemma}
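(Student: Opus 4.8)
The plan is to prove the two implications separately. The reverse implication is a direct computation: if $f=g+h$ with $g$ convex on $I$ and $\sup_{x\in I}|h(x)|\le\delta/2$, then for $x,y\in I$ and $t\in[0,1]$ I would expand $f(tx+(1-t)y)=g(tx+(1-t)y)+h(tx+(1-t)y)$, use convexity of $g$ to bound the first term by $tg(x)+(1-t)g(y)+\delta/2$, substitute $g(x)=f(x)-h(x)$ and $g(y)=f(y)-h(y)$, and absorb $-th(x)-(1-t)h(y)\le\delta/2$; this yields $f(tx+(1-t)y)\le tf(x)+(1-t)f(y)+\delta$, so $f$ is $\delta$-convex.

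For the forward implication, assume $f$ is $\delta$-convex and let $g_0$ be the convex envelope of $f$,
$$g_0(x):=\inf\Big\{\textstyle\sum_{i=1}^{n}\lambda_i f(x_i)\ :\ n\ge 1,\ x_i\in I,\ \lambda_i\ge 0,\ \sum_i\lambda_i=1,\ \sum_i\lambda_i x_i=x\Big\}.$$
Taking $n=1$ gives $g_0\le f$. Moreover $g_0$ is convex: if $x=\sum_i\lambda_i x_i$ and $y=\sum_j\mu_j y_j$ are representations, then $\sum_i(t\lambda_i)x_i+\sum_j((1-t)\mu_j)y_j$ is a convex combination equal to $tx+(1-t)y$, so $g_0(tx+(1-t)y)\le t\sum_i\lambda_i f(x_i)+(1-t)\sum_j\mu_j f(y_j)$, and passing to the infimum over the representations of $x$ and of $y$ gives $g_0(tx+(1-t)y)\le tg_0(x)+(1-t)g_0(y)$.

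The crux is to show $g_0\ge f-\delta$. Fix $x\in I$ and a representation $x=\sum_{i=1}^{n}\lambda_i x_i$; after merging equal nodes and deleting zero weights I may assume $x_1<\dots<x_n$ in $I$ with all $\lambda_i>0$ (the case $n=1$ being trivial). Let $\tilde P$ be the piecewise-linear convex function on $[x_1,x_n]$ whose graph is the lower boundary of the convex hull of the finitely many planar points $(x_1,f(x_1)),\dots,(x_n,f(x_n))$; its breakpoints lie among the $x_i$, and $\tilde P(x_i)\le f(x_i)$ for each $i$. By convexity of $\tilde P$ and Jensen's inequality, $\tilde P(x)=\tilde P(\sum_i\lambda_i x_i)\le\sum_i\lambda_i\tilde P(x_i)\le\sum_i\lambda_i f(x_i)$. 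On the other hand $x$ lies on an edge of this lower hull joining two of the points, say $(x_p,f(x_p))$ and $(x_q,f(x_q))$ with $x_p\le x\le x_q$ and $x_p,x_q\in I$; then $\tilde P(x)$ is exactly the value at $x$ of the chord of $f$ through these two points, so $\delta$-convexity of $f$ yields $f(x)\le\tilde P(x)+\delta$. Combining the two estimates gives $\sum_i\lambda_i f(x_i)\ge f(x)-\delta$, and taking the infimum over all representations gives $g_0(x)\ge f(x)-\delta$; in particular $g_0$ is everywhere finite, hence a genuine real-valued convex function. Setting $g:=g_0+\delta/2$ and $h:=f-g$ then finishes the proof: $g$ is convex, $f=g+h$, and from $0\le f-g_0\le\delta$ we get $h\in[-\delta/2,\delta/2]$, i.e.\ $\sup_{x\in I}|h(x)|\le\delta/2$.

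The one genuinely delicate step is the inequality $g_0\ge f-\delta$. A naive iteration of $\delta$-convexity across an $n$-point convex combination only yields an error of order $\delta\log_2 n$, which degenerates as $n\to\infty$; the essential idea is to exploit that $I$ is one-dimensional by replacing the nodes with the vertices of their lower convex hull, so that a single application of $\delta$-convexity on one hull edge suffices. The convexity and finiteness of $g_0$, and the reverse implication, are routine.
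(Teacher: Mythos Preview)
Your argument is correct and complete. The paper does not actually prove this lemma; it simply cites the original Hyers--Ulam paper \cite{HU} and P\'ales \cite{P}, treating the result as known. You instead supply a self-contained proof via the convex envelope $g_0$: showing $f-\delta\le g_0\le f$ and then shifting by $\delta/2$ is the classical route to the sharp constant. The key observation you isolate---that for an arbitrary finite convex combination in one dimension one may pass to the edge of the lower convex hull containing $x$, so that a \emph{single} application of $\delta$-convexity (rather than an iterated one losing a $\log_2 n$ factor) suffices---is precisely what makes the one-dimensional Hyers--Ulam theorem work with bound $\delta/2$. One cosmetic slip: in the reverse implication, the ``$+\delta/2$'' you attach after invoking convexity of $g$ on ``the first term'' really comes from the bound $h(tx+(1-t)y)\le\delta/2$ on the second summand; the arithmetic that follows is nonetheless correct.
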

\begin{proof} This is a particular case of the Hyers-Ulam Theorem \cite{HU,P}.
 \end{proof}
As a corollary we get the next result.
\begin{lemma}[Approximate Jensen]\label{apprJ}
If $f: I\to\mathbb{R}$ is $\delta$-convex and $X$ is an $I$-valued random variable in $L^1$, then
$$Ef(X)\ge f(E(X))-\delta.$$
\end{lemma}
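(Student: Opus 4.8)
The plan is to deduce the Approximate Jensen inequality directly from the Hyers-Ulam lemma together with the classical Jensen inequality. First I would invoke the Hyers-Ulam Lemma to write $f = g + h$, where $g\colon I\to\mathbb{R}$ is genuinely convex and $h\colon I\to\mathbb{R}$ satisfies $\sup_{x\in I}|h(x)|\le \delta/2$. Since $X$ takes values in $I$ and lies in $L^1$, the random variable $f(X)$ is integrable (note $g(X) = f(X) - h(X)$ is then integrable as well, $h(X)$ being bounded), so all the expectations below make sense.

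Next I would apply the ordinary Jensen inequality to the convex function $g$: this gives $E g(X) \ge g(E X)$, using that $E X \in I$ because $I$ is an interval (convex). Then I would estimate
$$
E f(X) = E g(X) + E h(X) \ge g(E X) - \frac{\delta}{2} = f(E X) - h(E X) - \frac{\delta}{2} \ge f(E X) - \frac{\delta}{2} - \frac{\delta}{2},
$$
where in the last step I bound $-h(E X) \ge -\delta/2$. This yields exactly $E f(X) \ge f(E(X)) - \delta$.

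The only genuinely delicate point is a technical one: making sure that $E X$ lies in $I$ and that the relevant integrability holds when $I$ is an unbounded or half-open interval (so that $f(E X)$ and $g(E X)$ are even defined). This is handled by the convexity of $I$ for the location of $E X$, and by the decomposition $f = g+h$ with $h$ bounded for the integrability reduction to the classical convex case. Everything else is a direct substitution; there is no serious obstacle, which is why the result is stated as a corollary of the Hyers-Ulam Lemma.
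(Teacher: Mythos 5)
Your proof is correct and follows exactly the paper's own argument: decompose $f=g+h$ via the Hyers--Ulam Lemma, apply Jensen's inequality to the convex part $g$, and absorb the two $\delta/2$ errors from $Eh(X)$ and $-h(E X)$. The extra care you take with $E X\in I$ and integrability is a harmless (and welcome) elaboration of the same route.
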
 
\begin{proof}
Consider the  Hyers-Ulam decomposition, $f=g+h$. We have by Jensen's inequality that
$$Ef(X)=Eg(X)+Eh(X)\ge g(E(X))-\delta/2=f(E(X))-h(E(X))-\delta/2\ge f(E(X))-2 \delta/2,$$
as claimed.
\end{proof}
\begin{proof} (of Theorem \ref{trafo})
We treat the discrete case; the continuous case is very similar.
 
Let us `exponentiate'  Lemma \ref{apprJ}. That is, for $F:=e^f$, 
$$E[F(X)\mid \mathcal{F}_{m}]=E[e^{f(X)}\mid \mathcal{F}_{m}]\ge e^{E[f(X)\mid \mathcal{F}_{m}]}\ge  e^{f(E[X\mid \mathcal{F}_{m}])}e^{-\delta}=F(E[X\mid \mathcal{F}_{m}])e^{-\delta},$$
where the first inequality uses the conditional Jensen's inequality for $Y:=f(X)$, and the second inequality exploits Lemma \ref{apprJ} for the conditional expectation. (This is fine because in the proof of Lemma \ref{apprJ}, one can use conditional Jensen too for $g$.)
Now, to see that $Y$ defined by $Y_n=\exp[f(X_{n})]=F(X_n)$ is uniformly $e^{-\delta}$-achieving, replace $X$ by $X_n$, where $n\ge m\ge 0$. Then one has
$$E(F(X_{n})\mid \mathcal{F}_{m})\ge e^{-\delta} F(E((X_{n})\mid \mathcal{F}_{m}))\ge e^{-\delta} F(X_m),\ n\ge m\ge 0,$$
where the last step relies on the submartingale assumption and monotonicity. 
 \end{proof}
 \begin{remark} Note that in the Hyers-Ulam decomposition, the convex function $g$ is not necessarily non-decreasing, hence $g(X)$ and $e^{g(X)}$ are not necessarily  submartingales, preventing one from using Doob's inequality. 
 \end{remark}
Taking the  composition of the two transformations appearing in Theorem \ref{trafo} and Lemma \ref{backtrafo} (from $a$-achieving to SMG to $a$-achieving, or from SMG to $a$-achieving to SMG), we immediately get the following invariance results, stated, for simplicity, in the discrete case. (In the continuous case $f$ must be continuous and the processes must be right-continuous, as well.)
 \begin{theorem}[Invariance]
 Let $\delta\ge 0$ and $f$ be a non-decreasing $\delta$-convex function on $I$.
 
 (i) If $X$ is an $I$-valued submartingale then so is $Y$, where $Y_n:=\exp(\delta n+f(X_n))$, provided it is integrable.
 
 (ii)  If $X$ is $a$-achieving, then $U$ defined by $U_n:=e^{f(X_n/a^n)}$ is uniformly $e^{-\delta}$-achieving, provided $X_n(\omega)/a^n\in I$ for all $\omega\in\Omega,n\ge 0$.
 \end{theorem}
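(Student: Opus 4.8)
The plan is to obtain both parts by simply composing the two correspondences already established: Lemma~\ref{backtrafo}, which converts between $a$-achieving processes and submartingales via the deterministic rescaling $n\mapsto a^{-n}$, and Theorem~\ref{trafo}, which sends a submartingale through $e^{f(\cdot)}$ to a uniformly $e^{-\delta}$-achieving process. No genuinely new estimate is needed; the work is only in keeping track of which process plays which role and in checking the side conditions (integrability, and that the rescaled process stays in $I$).

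For part (i), I would start from the $I$-valued submartingale $X$ and apply Theorem~\ref{trafo} with the given non-decreasing $\delta$-convex $f$, concluding that $Y'$ defined by $Y'_n=e^{f(X_n)}$ is uniformly $e^{-\delta}$-achieving, hence in particular $e^{-\delta}$-achieving (take $n=m+1$ in \eqref{unif.achieve.disc}). Then I would invoke Lemma~\ref{backtrafo}(a) with $a=e^{-\delta}$: since $Y'$ is $a$-achieving, the process $n\mapsto a^{-n}Y'_n=e^{\delta n}e^{f(X_n)}=e^{\delta n+f(X_n)}=Y_n$ is a submartingale. This is exactly the claim, the only proviso being the integrability of $Y$, which is assumed in the statement.

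For part (ii), I would run the same two arrows in the opposite order. Starting from an $a$-achieving process $X$, Lemma~\ref{backtrafo}(a) gives that $W_n:=a^{-n}X_n$ defines a submartingale; the hypothesis $X_n(\omega)/a^n\in I$ guarantees that $W$ is $I$-valued, so that $f$ may legitimately be applied to it. Now Theorem~\ref{trafo}, applied to the submartingale $W$ and the function $f$, shows that the process $e^{f(W)}$---whose $n$-th term is $e^{f(X_n/a^n)}=U_n$---is uniformly $e^{-\delta}$-achieving, as desired.

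The one point requiring a little care---the closest thing to an obstacle---is the bookkeeping of the side conditions rather than any inequality: one must verify that each intermediate process is integrable (so that ``submartingale'' and ``$a$-achieving'' are even meaningful) and, in (ii), that the rescaling does not push the process out of the domain $I$ of $f$; both are precisely the hypotheses imposed in the statement. In the continuous-time analogue one would additionally need $f$ continuous and the processes right-continuous, so that the continuous versions of Lemma~\ref{backtrafo} and Theorem~\ref{trafo} apply, but the composition argument is otherwise identical.
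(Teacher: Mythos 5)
Your proposal is correct and is essentially the paper's own argument: the paper proves the Invariance theorem precisely by composing Theorem \ref{trafo} with Lemma \ref{backtrafo} (SMG $\to$ uniformly $e^{-\delta}$-achieving $\to$ SMG for part (i), and $a$-achieving $\to$ SMG $\to$ uniformly $e^{-\delta}$-achieving for part (ii)), exactly as you do. Your explicit bookkeeping of the side conditions (integrability, and that $X_n/a^n$ stays in $I$) just spells out what the paper leaves implicit.
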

 \begin{proof}
 The claims follow from Theorem \ref{trafo} and Lemma \ref{backtrafo}.
 \end{proof}
 {\bf Acknowledgement.} The author is grateful to an anonymous referee for his/her close reading of the manuscript and for pointing out some glitches.

\end{document}